\def\Dom{\mathop{\rm Dom}}
\def\sup{\mathop{\rm sup}}
\def\Real{\mathop{\rm Re}}
\newtheorem{theorem}{Theorem}[section]
\newtheorem{corollary}[theorem]{Corollary}
\newtheorem{remark}[theorem]{Remark}
\begin{document}

\title[]{Stability of solutions to abstract evolution equations in Banach spaces under nonclassical assumptions}

\author{N. S. Hoang}

\address{Mathematics Department, University of West Georgia,
Carrollton, GA 30118, USA}
\email{nhoang@westga.edu}

\subjclass[2000]{34G20, 37L05, 44J05, 47J35.}

\date{}

\keywords{Evolution equations, stability, Lyapunov stable, asymptotically stable}

\begin{abstract}
The stability of the solution to the equation $(*)\dot{u} = F(t,u)+f(t)$, $t\ge 0$, $u(0)=u_0$ is studied. 
Here $F(t,u)$ is a nonlinear operator in a Banach space $\mathcal{X}$ for any fixed $t\ge 0$ and $F(t,0)=0$, $\forall t\ge 0$. 
We assume that the Fr\'echet derivative of $F(t,u)$ is H\"{o}lder continuous of order $q>0$ with respect to $u$ for any fixed $t\ge 0$, i.e., $\|F'_u(t,w) - F'_u(t,v)\|\le \alpha(t)\|v - w\|^{q}$, $q>0$. 
We proved that the equilibrium solution $v=0$ to the equation $\dot{v} = F(t,v)$ is Lyapunov stable under persistently acting 
perturbation $f(t)$ if $\sup_{t\ge 0}\int_0^t \alpha(\xi)\|U(t,\xi)\|\, d\xi<\infty$ and $\sup_{t\ge 0}\|U(t)\|<\infty$. Here, $U(t):=U(t,0)$ and $U(t,\xi)$ is the solution to the equation $\frac{d}{dt}{U}(t,\xi) = F'_u(t,0)U(t,\xi)$, $t\ge \xi$, $U(\xi,\xi)=I$, where $I$ is the identity operator in $\mathcal{X}$. 
Sufficient conditions for the solution $u(t)$ to equation (*) to be bounded and for $\lim_{t\to\infty}u(t) = 0$ are proposed and justified. 
Stability of solutions to equations with unbounded operators in Hilbert spaces is also studied. 
\end{abstract}
\maketitle

\pagestyle{plain}

\section{Introduction}
Consider the following equation
\begin{equation}
\label{eq1}
\dot{u} = F(t,u) + f(t),\qquad t\ge 0,\quad u(0)= u_0,\quad \dot{u} := \frac{d u}{dt},
\end{equation}
in a Banach space $\mathcal{X}$. 
It is assumed in equation \eqref{eq1} that $F(t,u)$ is an operator function from $\mathbb{R}\times\mathcal{X}$ to $\mathcal{X}$, nonlinear in general, for any fixed $t\ge 0$,
and that
\begin{equation}
\label{ae5}
\|F(t,u) - F'_u(t,0)u\| \le \alpha(t) \|u\|^p,\qquad p>1,\quad t\ge 0,\quad u\in \mathcal{X},
\end{equation}
where $F'_u(t,0) :=F'_u(t,u)|_{u=0}$ and $F'_u(t,u)$ denotes the Fr\'echet derivative of $F(t,u)$ for any fixed $t\ge 0$. 
Here, $\|\cdot\|$ denotes the norm in $\mathcal{X}$. 
Assume also that $f(t)$ is a function on $\mathbb{R}_+:=[0,\infty)$ with values in $\mathcal{X}$ and
\begin{equation}
\label{ae6}
\|f(t)\| \le \beta(t),\qquad t\ge 0.
\end{equation}
Note that inequality \eqref{ae5} implies $F(t,0) = 0$. Thus, $u=0$ is an equilibrium solution to the equation 
$$
\dot{u} = F(t,u),\qquad t\ge 0.
$$
We assume that the functions $\alpha(t)$ and $\beta(t)$ in \eqref{ae5} and \eqref{ae6} are in $L^1_{loc}([0,\infty))$ and that $\alpha(t)$ and $\beta(t)$ are nonnegative on $[0,\infty)$.  
Also, we assume that equation \eqref{eq1} has a unique local solution. 
A stronger assumption on the local existence of a solution to equation \eqref{eq1} is made in Assumption A below. By a solution to problem \eqref{eq1} we mean a classical solution. 
Specifically, a global solution to \eqref{eq1} is a continuously differentiable function $u:[0,\infty)\to \mathcal{X}$ which satisfies equation \eqref{eq1}. A local solution to equation \eqref{eq1} 
is a continuously differentiable function $u:[0,T)\to \mathcal{X}$, for some $T>0$, which solves equation \eqref{eq1}. Thus, the solution space for the global existence is $C^1([0,\infty);\mathcal{X})$ and for the local existence is $C^1([0,T);\mathcal{X})$. 


Stability of the solution to equation \eqref{eq1} in special forms has been extensively studied in the literature (see, e.g., \cite{C}, \cite{CL}, \cite{DK}, \cite{D}, \cite{H}, \cite{L}, \cite{T}). 
A special case for equation \eqref{eq1} is the equation
\begin{equation}
\label{s1}
\dot{u} = Au,\qquad u(0) = u_0,\qquad \dot{u}:=\frac{du}{dt},
\end{equation}
where $A$ is an $n$-by-$n$ matrix and $u:[0,\infty)\to \mathbb{R}^n$. 
The classical stability result for equation \eqref{s1} states that 
if all eigenvalues of $A$ lie in the half-plane $\Real\lambda <0$, then the solution to equation \eqref{s1} exists globally, is unique, and is asymptotically stable. If $A$ has an eigenvalue which lies in the 
half-plane $\Real\lambda > 0$, then $u(t)$ is not bounded, in general. 

The following nonlinear differential equation was studied in \cite{DK}:
\begin{equation*}
\dot{u} = A(t)u + f(t),\qquad u(0) = u_0.
\end{equation*}
It was assumed in \cite{DK} that $A(t)$ is a linear and bounded operator in a Banach space $\mathcal{X}$ and that $f(t)$ is a function from $[0,\infty)$ to $\mathcal{X}$. Let $U(t,\xi)$ be the solution to
\begin{equation*}
\dot{U}(t,\xi) = A(t)U(t,\xi),\qquad t\ge \xi,\qquad U(\xi,\xi) = I,\qquad \dot{U}(t,\xi):=\frac{d}{dt}U(t,\xi),
\end{equation*}
where $I$ is the identity operator in $\mathcal{X}$. 
Define
\begin{equation*}
\kappa = \overline{\lim}_{t,s\to\infty} \frac{\ln \|U(t+s,s)\|}{t}. 
\end{equation*}
Then it is known that if $\kappa<0$, then the solution $u=0$ is asymptotically stable when $f=0$ (see, e.g., \cite{DK}).

In \cite{R2}, \cite{R3}, \cite{R4} and \cite{R5} the following equation was studied
\begin{equation}
\label{l1}
\dot{u} = A(t)u + F(t,u) + f(t),\qquad u(0) = u_0.
\end{equation}
It is assumed in \cite{R4} that $A(t)$ is a linear and densely defined operator in a Hilbert space $\mathcal{H}$, $F(t,u)$ is a nonlinear operator in $\mathcal{H}$ for any fixed $t\ge 0$, and $f(t)$ is a function on $[0,\infty)$ with values in $\mathcal{H}$. 
In addition, it is assumed that 
\begin{equation}
\label{l2}
\Real\langle u, Au\rangle \le \gamma(t)\|u\|^2, \qquad \|F(t,u)\| \le \alpha(t)\|u\|^p,\quad p>1,\qquad \|f(t)\|\le \beta(t),
\end{equation}
for all $t\ge 0$ where $\langle \cdot,\cdot\rangle$ and $\|\cdot\|$ denote the inner product and the norm in $\mathcal{H}$, respectively. 
Using equation \eqref{l1} and inequalities in \eqref{l2}, one obtains the following inequality (cf. \cite{R1}, \cite{R4})
\begin{equation*}
\dot{g}(t) \le \gamma(t)g(t) + \alpha(t)g^p(t) + \beta(t),\qquad t\ge 0,\quad g(t) := \|u(t)\|,\quad p>1.
\end{equation*}
It was proved in \cite[Lemma 1]{R4} that if there exists a function $\mu(t)>0$ such that 
\begin{equation}
\label{l3}
\alpha(t)\frac{1}{\mu^p(t)} + \beta(t) \le \frac{1}{\mu(t)} \bigg(-\gamma(t) - \frac{\dot{\mu}(t)}{\mu(t)}\bigg),\qquad t\ge 0,\qquad \|u(0)\|\mu(0)\le 1,
\end{equation}
then
\begin{equation}
\label{l4}
0\le g(t) \le \frac{1}{\mu(t)},\qquad \forall t\ge 0.
\end{equation}
Using this result, it was proved in \cite{R4} that if $\int_0^\infty\big[\gamma(t) + \alpha(t)\big]\, dt$ is not `large', then the equilibrium solution $u=0$ to the equation $\dot{u} = F(t,u)$ is Lyapunov stable under persistently acting perturbation $f(t)$. Namely, given any arbitrarily small $\epsilon>0$, if $\|f(t)\|$ is sufficiently small, then there exists $\delta>0$ such that if $\|u(0)\|<\delta$, then $\|u(t)\| <\epsilon$ for all $t\ge 0$. Other results for the boundedness of $\|u(t)\|$ were also obtained by using inequalities \eqref{l3} and \eqref{l4}. 

Several stability results for equation \eqref{l1} were also obtained in \cite{NH}.
One of the results states that  if $\int_0^\infty\big[\gamma(t) + \alpha(t)\big]\, dt < \infty$, then the equilibrium solution $u=0$ is Lyapunov stable under persistently acting perturbation $f(t)$. This result is stronger than the one in \cite{R4} as the function $\gamma(t)$ can take positive and negative values and $\int_0^\infty\big[\gamma(t) + \alpha(t)\big]\, dt$ can be arbitrarily large as long as it is finite. 
Other sufficient conditions for the boundedness of $u(t)$ and for $\lim_{t\to}u(t)=0$ were also proposed and justified in \cite{NH}. 
The advantage of the results in \cite{NH} compared to those in \cite{R4} is that: one does not have to find a function $\mu(t)>0$ which solves inequality \eqref{l3}. Moreover, the results in \cite{NH} are applicable to the case when $\gamma(t)$ takes both positive and negative values, for example, $\gamma(t)=\sin t$. This case is not easy to handle using the results in \cite{R4}. 
However, the results in \cite{NH} are not applicable to equations in Banach spaces. The objective of this paper is to extend the stability results in \cite{NH} for equations in Banach spaces and also for the case when the function $\gamma(t)$ is not in $L^1[0,\infty)$ for equation \eqref{l1}.

In this paper we study the stability of the solution to equation \eqref{eq1} under non-classical assumptions. 
The new results in this paper include Theorem \ref{thm2.1} in which we proved that the solution $v=0$ to equation $\dot{v} = F(t,v)$ is Lyapunov stable under persistently acting perturbation $f(x)$ if $\sup_{t\ge 0}\|U(t)\|<\infty$ and $\sup_{t\ge 0}\int_0^t \|U(t,\xi)\|\alpha(\xi)\, d\xi < \infty $ where $U(t):=U(t,0)$, and $U(t,\xi)$ is the solution to the equation $\frac{d}{dt}{U}(t,\xi) = F'_u(t,0)U(t,\xi)$, $t\ge \xi$, $U(\xi,\xi)=I$. Here $I$ is the identity operator in $\mathcal{X}$. 
A sufficient condition for the solution to equation \eqref{eq1} to be bounded is proposed and justified in Theorem \ref{thm2.3}. 
A consequence of this result is: 
If $\int_0^\infty \|U^{-1}(\xi)\|\|U(\xi)\|^p \alpha(\xi)\, d\xi <\infty$ and $\lim_{t\to\infty}\|U(t)\| = 0$, then 
the solution $u(t)$ to equation \eqref{eq1} satisfies $\lim_{t\to\infty}u(t)=0$ provided that $\|f(t)\|$ is sufficiently small. An estimate for the rate of growth/decay of $\|u(t)\|$ when $t$ tends to infinity is also given in Theorem \ref{thm2.3}. 
Stability of solutions to equations with unbounded operators in Hilbert spaces is also studied in Section \ref{section3}. 

\section{Equations in Banach spaces}

In this section we will study the stability of the solution to equation \eqref{eq1}. 
Let 
\begin{equation}
\label{defnB}
B(t):=F'_u(t,0),\qquad t\ge 0,
\end{equation}
where $F'_u(t,0):=F'_u(t,u)|_{u=0}$ and $F'_u(t,u)$ is the Fr\'echet derivative of $F(t,u)$ with respect to $u$. 
Let $b(t) := \|B(t)\|$ and assume that $b(t)$ is in $L^1_{loc}[0,\infty)$. 
Equation \eqref{eq1} can be written as
\begin{equation}
\label{2e1}
\dot{u} = B(t)u + G(t,u) + f(t),\qquad t\ge 0,\qquad u(0) = u_0,
\end{equation}
where
$$
G(t,u):= F(t,u) - F'_u(t,0)u.
$$
This and inequality \eqref{ae5} imply
\begin{equation}
\label{dec01}
\|G(t,u)\| \le \alpha(t) \|u\|^p,\qquad t\ge 0,\quad u\in \mathcal{X}.
\end{equation}

Let us first show that condition \eqref{ae5} holds if $F(t,0)=0$, $t\ge 0$, and the Fr\'echet derivative $F'_u(t,u)$ is H\"{o}lder continuous of order $q=p-1>0$ with respect to $u$. 
We have
\begin{equation*}
F(t,u) - F(t,0) = \int_0^1 F'_u(t,\xi u)u\, d\xi. 
\end{equation*}
Thus, if $F(t,0)=0$, then
\begin{equation}
\label{q7}
\begin{split}
G(t,u) &=  F(t,u) - F'_u(t,0)u =  F(t,u) - F(t,0) - \int_0^1 F'_u(t,0)u\, d\xi\\
          &= \int_0^1 F'_u(t,\xi u)u\, d\xi - \int_0^1 F'_u(t,0)u\, d\xi = \int_0^1 \bigg[F'_u(t,\xi u) - F'_u(t,0)\bigg]u\, d\xi. 
\end{split}
\end{equation}
Assume that 
\begin{equation}
\label{q8}
\|F'_u(t,v) - F'_u(t,w)\| \le (q+1)\alpha(t) \|v - w\|^{q},\qquad q = p-1>0,\quad v,w\in B(0,R)\subset \mathcal{X},
\end{equation}
where $B(0,R)$ is a ball in $\mathcal{X}$ centered at the origin of a sufficiently large radius $R>0$. 
This inequality means that the Fr\'echet derivative $F'_u(t,u)$ is H\"{o}lder continuous of order $q$ with respect to $u$ in the ball $B(0,R)\subset \mathcal{X}$. 
From \eqref{q7} and \eqref{q8} we get
\begin{equation}
\label{z10}
\begin{split}
\|G(t,u)\|  &\le \int_0^1 \|F'_u(t,\xi u) - F'_u(t,0)\| \|u\|\, d\xi \le (q+1)\alpha(t) \int_0^1 \|\xi u \|^{q}\, d\xi \|u\|\\
&= (q+1)\alpha(t) \int_0^1 \xi^{q}\, d\xi \|u\|^{q+1} = \alpha(t)\|u\|^{p},\qquad p=q+1.
\end{split}
\end{equation}
Therefore, inequalities \eqref{ae5} and \eqref{dec01} hold if $F(t,0)=0$, $t\ge 0$, and 
inequality \eqref{q8} holds. Thus, inequality \eqref{ae5} is not a restrictive one. 

Let $U(t)$ be the solution to the equation
\begin{equation}
\label{2e2}
\frac{d}{d t}U(t) =  B(t)U(t),\qquad t\ge 0,\qquad U(0) = I 
\end{equation}
where $B(t)$ is defined in \eqref{defnB} and $I$ is the identity operator in $\mathcal{X}$. 
The solution $U(t)$ to equation \eqref{2e2} exists globally and is unique if the function $b(t)=\|B(t)\|$ is in $L^1_{loc}[0,\infty)$  (see, e.g., \cite{DK}). 
Moreover, the inverse operator $U^{-1}(t)$ exists, for any fixed $t\ge 0$, and $U^{-1}(t)$ solves the following equation (see, e.g., \cite{DK}):
\begin{equation}
\label{2e3}
\frac{d}{d t}U^{-1}(t) =  -U^{-1}(t)B(t),\qquad t \ge 0,\qquad U^{-1}(0) = I.
\end{equation}
Equations \eqref{2e1} and \eqref{2e3} imply
\begin{equation}
\label{eq13*}
\frac{d}{dt}\bigg(U^{-1}(t)u(t)\bigg) = U^{-1}(t)\bigg(G(t,u(t)) + f(t)\bigg),\qquad t\ge 0. 
\end{equation}
Integrate this equation from $0$ to $t$ to get
$$
U^{-1}(t)u(t) -  U^{-1}(0)u(0)  = \int_0^t U^{-1}(\xi)\bigg(G(\xi,u(\xi)) + f(\xi)\bigg)\, d\xi,\qquad t\ge 0. 
$$
This implies
\begin{equation}
\label{2e4}
u(t) = U(t)u_0 + U(t)\int_0^t U^{-1}(\xi)\big[G(\xi,u(\xi)) + f(\xi)\big]\, d\xi.
\end{equation}
Here, we have used the relations $U^{-1}(0) = I$ and $u(0) = u_0$. 
Equation \eqref{2e4}, the triangle inequality, inequality \eqref{dec01}, and the relation $\beta(t) = \|f(t)\|$ imply 
\begin{equation}
\label{zx1}
\|u(t)\| \le \|U(t)\|\|u_0\| + \int_0^t \|U(t)U^{-1}(\xi)\|\bigg(\alpha(\xi)\|u(\xi)\|^p + \beta(\xi)\bigg)\, d\xi.
\end{equation}

\begin{remark}{\rm 
From equation \eqref{2e2} one can verify that the operator $U(t,\xi):=U(t)U^{-1}(\xi)$ is the solution to the equation
\begin{equation}
\label{eqzx1}
\frac{\partial }{\partial t}U(t,\xi) = B(t)U(t,\xi),\qquad t\ge \xi,\qquad U(\xi,\xi) = I. 
\end{equation}
Also, we have
\begin{equation*}
\|U(t)U^{-1}(\xi)\| \le \|U(t)\| \|U^{-1}(\xi)\|,\qquad t,\xi\ge 0.
\end{equation*}
}
\end{remark}

Throughout this section, we assume that the following assumption holds:

\vskip 0.1in
\noindent
{\bf Assumption A.} The equation
$$
\dot{u} = F(t,u) + f(t),\quad t\ge t_0,\quad u(t_0)= \tilde{u}_0,\quad \dot{u} := \frac{d u}{dt}
$$
has a unique local solution for any $t_0\ge 0$ and $\tilde{u}_0\in B(0,R)\subset \mathcal{X}$ where $R>0$ is sufficiently large.

\vskip 0.2in
\noindent 
{\bf Assumption A1.} The following inequalities hold
\begin{equation}
\label{ieq27}
\sup_{t\ge 0} \|U(t)\| <\infty,\qquad M:=\sup_{t\ge 0}\int_0^t \|U(t,\xi)\|\alpha(\xi)\, d\xi <\infty,
\end{equation}
where $U(t):=U(t,0)$, $U(t,\xi)$ is the solution to equation \eqref{eqzx1}, and the function $\alpha(t)$ is from inequality \eqref{ae5}. 
\vskip 0.2in 

\begin{remark}{\rm
The first inequality in Assumption {\bf A1} is a necessary condition for $u(t)$ to be bounded. If this inequality does not hold, then the solution to equation \eqref{eq1} is unbounded even for the case when $\alpha(t)\equiv 0$, in general. 
The second inequality in Assumption {\bf A1} means that the contribution from the nonlinear term $G(t,u)$ is not too large as $t$ tends to infinity. 
}
\end{remark}

\begin{theorem}
\label{thm2.1}
Let Assumptions {\bf A} and {\bf A1} hold and let $u(t)$ be the solution to equation \eqref{eq1}. 
Given any arbitrarily small $\epsilon>0$, if $\|f(t)\|$ is sufficiently small, then there exists $\delta>0$ such that if $\|u(0)\|<\delta$, then $\|u(t)\| <\epsilon$ for all $t\ge 0$.
\end{theorem}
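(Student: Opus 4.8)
The plan is to run a continuity (``barrier'') argument on the a priori inequality \eqref{zx1}, using the superlinearity $p>1$ to make the nonlinear feedback negligible at small amplitude. Writing $g(t):=\|u(t)\|$ and invoking the Remark's identity $U(t,\xi)=U(t)U^{-1}(\xi)$, inequality \eqref{zx1} becomes
\[
g(t)\le \|U(t)\|\,\|u_0\| + \int_0^t \|U(t,\xi)\|\,\alpha(\xi)\,g^p(\xi)\,d\xi + \int_0^t \|U(t,\xi)\|\,\beta(\xi)\,d\xi .
\]
I set $C:=\sup_{t\ge0}\|U(t)\|$ and let $M$ be as in Assumption {\bf A1}, both finite, and I write $N:=\sup_{t\ge0}\int_0^t\|U(t,\xi)\|\beta(\xi)\,d\xi$. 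The hypothesis ``$\|f(t)\|$ sufficiently small'' I interpret as the requirement that $N$ be small; this is the natural smallness of $f$ relative to the evolution family, and it is needed because $\sup_t\int_0^t\|U(t,\xi)\|\,d\xi$ may fail to be finite under {\bf A1} alone, so pointwise smallness of $\beta$ would not suffice.

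Since the conclusion for a given $\epsilon$ follows from the same conclusion for any smaller $\epsilon'$, it suffices to treat small $\epsilon$; in particular I may assume $M\epsilon^{p-1}\le \tfrac12$, i.e. $M\epsilon^p\le \epsilon/2$. This is the only place where $p>1$ is used. I then fix $\delta>0$ with $C\delta<\epsilon/4$ and require $f$ small enough that $N<\epsilon/4$, so that $K:=C\delta+M\epsilon^p+N$ is a fixed constant with $K<\epsilon$.

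The continuation step proceeds as follows. Using Assumption {\bf A} and $g(0)=\|u_0\|<\delta<\epsilon$, define
\[
T^\ast:=\sup\{\,T>0 : u \text{ exists on } [0,T) \text{ and } g(t)<\epsilon \text{ on } [0,T)\,\},
\]
so that $T^\ast>0$. On $[0,T^\ast)$ one has $g^p(\xi)<\epsilon^p$; substituting this into the displayed inequality and using the definitions of $C$, $M$, $N$ gives $g(t)\le K<\epsilon$ for every $t<T^\ast$. This uniform bound keeps $u(t)$ in $B(0,\epsilon)\subset B(0,R)$, which both excludes finite-time blow-up and, by continuity, yields $g(T^\ast)\le K<\epsilon$ whenever $T^\ast<\infty$. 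Local solvability from the data $u(T^\ast)$ (Assumption {\bf A}) then extends the solution past $T^\ast$ with $g<\epsilon$ maintained near $T^\ast$, contradicting the maximality of $T^\ast$. Hence $T^\ast=\infty$, the solution is global, and $\|u(t)\|<\epsilon$ for all $t\ge0$.

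The main obstacle is precisely the superlinear term $\int_0^t\|U(t,\xi)\|\alpha(\xi)g^p(\xi)\,d\xi$: with $p>1$ there is no direct Gronwall bound and naive iteration does not close. The barrier argument bypasses this because $M\epsilon^p=o(\epsilon)$ as $\epsilon\to0$, so at small amplitude the nonlinear contribution is dominated by the barrier level itself; the remaining care is organizational, namely phrasing the continuation so that the single a priori bound simultaneously forbids blow-up and forbids $g$ from reaching $\epsilon$.
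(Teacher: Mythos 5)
Your proposal is correct and follows essentially the same route as the paper's own proof: a continuation (barrier) argument applied to inequality \eqref{zx1}, with the choices $M\epsilon^{p-1}$ small (exploiting $p>1$), $C\delta<\epsilon/4$, and the $f$-contribution below $\epsilon/4$, closing with the extension step via Assumption {\bf A}. The only difference is how ``$\|f(t)\|$ sufficiently small'' is formalized: the paper takes $\beta(t)<\epsilon\alpha(t)/(4M)$ pointwise, which by Assumption {\bf A1} automatically yields your condition $N=\sup_{t\ge0}\int_0^t\|U(t,\xi)\|\beta(\xi)\,d\xi<\epsilon/4$, whereas you impose smallness of $N$ directly --- a slightly more general and equally valid reading, and your observation that uniform pointwise smallness of $\beta$ alone would not suffice under {\bf A1} is exactly why the paper ties $\beta$ to $\alpha$.
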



\begin{remark}{\rm
Equation \eqref{eq1} can be considered as a perturbed equation of the following equation
\begin{equation}
\label{unpertubed}
\dot{v} = F(t,v),\qquad t\ge 0. 
\end{equation}
Since $F(t,0) = 0$, $\forall t\ge0$, the function $v(t)\equiv 0$ is an equilibrium solution to equation \eqref{unpertubed}. 
The function $f(t)$ in \eqref{eq1} can be considered as persistently acting perturbation to equation \eqref{unpertubed}. 
Under these terminologies, the conclusion of Theorem \ref{thm2.1} can be rephrased as 
the equilibrium solution $v=0$ to equation \eqref{unpertubed} is Lyapunov stable under persistently acting perturbations $f(t)$. 
}
\end{remark}

\begin{proof}[Proof of Theorem \ref{thm2.1}]
It follows from the second inequality in \eqref{ieq27} that if 
$\epsilon>0$ is sufficiently small then one gets
\begin{equation}
\label{q2}
\epsilon^{p-1}\sup_{t\ge 0}\int_0^t \|U(t,\xi)\|\alpha(\xi)\, d\xi < \frac{1}{4},\qquad p>1.
\end{equation}
Since $\sup_{t\ge 0} \|U(t)\| <\infty$ by \eqref{ieq27}, one can choose $\delta>0$ sufficiently small so that
\begin{equation}
\label{eq5}
\delta \sup_{t\ge 0}\|U(t)\| < \frac{\epsilon}{4},\qquad \delta < \epsilon.
\end{equation}

{\it Let us prove that if $0\le \|u_0\|<\delta$ and $\beta(t) = \|f(t)\|$ is sufficiently small, then $\|u(t)\|<\epsilon$ for all $t\ge 0$.} 

Choose $f(t)$ so that
\begin{equation*}
\beta(t) := \|f(t)\| <  \frac{\epsilon \alpha(t)}{4M},\qquad t\ge 0.
\end{equation*}
This and the second inequality in \eqref{ieq27} imply
\begin{equation}
\begin{split}
\label{eq6}
\int_0^t \|U(t,\xi)\|\beta(\xi)\, d\xi &<  \frac{\epsilon}{4M}\int_0^t \|U(t,\xi)\|\alpha(\xi)\, d\xi \\
& \le \frac{\epsilon}{4M} \sup_{t\ge 0 }\int_0^t \|U(t,\xi)\|\alpha(\xi)\, d\xi \le 
\frac{\epsilon}{4M}M = \frac{\epsilon}{4},\qquad \forall t\ge 0.
\end{split}
\end{equation}

{\it Let $[0,\tilde{T})$ be the maximal interval of existence of the solution $u(t)$ to equation \eqref{eq1}.} 
It is clear that $\tilde{T}>0$ by Assumption A. 
Let us prove that $\tilde{T}=\infty$, i.e., the solution $u(t)$ exists globally. Assume the contrary that $\tilde{T}$ is finite. 
Let $T>0$ be the largest value such that
\begin{equation}
\label{eqa1}
\|u(t)\| \le \epsilon,\qquad \forall t\in [0,T).
\end{equation}
Since $[0,\tilde{T})$ is the maximal interval of existence of $u(t)$, one has $0<T\le \tilde{T}$.  

{\it Let us prove that $T\ge\tilde{T}$}. Assume the contrary that $T<\tilde{T}$. 
Thus, $T$ is finite and, by the continuity of $g(t)$ and Assumption {\bf A} we have
\begin{equation}
\label{eqs2}
\|u(T)\| = \epsilon.
\end{equation}
Indeed, if $\|u(T)\| < \epsilon$, then by using Assumption {\bf A} one can extend the solution $u(t)$ to a larger interval, say, $[0,T+\theta)$, for some $\theta>0$, and $\|u(t)\| \le \epsilon,\forall t\in [0,T+\theta)$. This contradicts the definition of $T$. 
From inequalities \eqref{zx1} and \eqref{eqa1} one gets
$$
\|u(t)\|\le \|U(t)\| \|u_0\| + \int_0^t \|U(t,\xi)\| [\alpha(\xi)\epsilon^{p} + \beta(\xi)]\, d\xi,\qquad 0\le t< T.
$$
This, the inequality $\|u_0\| < \delta$, and inequalities \eqref{q2}, \eqref{eq5}, and \eqref{eq6} imply
\begin{equation*}
\begin{split}
\|u(t)\|&\le \sup_{t\ge 0}\|U(t)\|\delta + \epsilon \int_0^t \|U(t,\xi)\| \alpha(\xi)\epsilon^{p-1}\, d\xi  + \int_0^t \|U(t,\xi)\| \beta(\xi)\, d\xi \\
&\le \frac{\epsilon}{4} + \frac{\epsilon}{4}+ \frac{\epsilon}{4} = \frac{3\epsilon}{4},\qquad 0\le t< T.
\end{split}
\end{equation*}
This and the continuity of $u(t)$ imply $\|u(T)\| \le \frac{3\epsilon}{4}$ which contradicts relation \eqref{eqs2}. 
This contradiction implies that $T\ge\tilde{T}$, i.e.,
\begin{equation}
\label{q3}
\|u(t)\| \le \epsilon,\qquad 0\le t <\tilde{T}.
\end{equation}

It follows from inequality \eqref{q3} and the continuity of $u(t)$ that $\|u(\tilde{T})\| \le \epsilon$. 
Using the inequality $\|u(\tilde{T})\| \le \epsilon$ and Assumption {\bf A}, one obtains the existence of $u(t)$ on a larger interval, say, $[0,\tilde{T} + \theta)$ for some $\theta >0$. This contradicts the definition of $[0,\tilde{T})$ as the maximal interval of existence of $u(t)$. The contradiction implies that $\tilde{T} = \infty$. In addition, one has $\|u(t)\| \le \epsilon$, $\forall t\ge 0$. 
Theorem \ref{thm2.1} is proved. 
\end{proof}

Now we consider the following problem: Given the nonlinear operator $F(t,u)$, i.e., given $F'_u(t,0)$ and $\alpha(t)$, under what conditions on $f(t)$ does the solution to problem \eqref{eq1} exist globally, is the solution bounded, and does it decay to zero as $t\to\infty$? An answer to this question is given in Assumption {\bf A2} and this answer is justified in Theorem \ref{thm2.3} below. 

\vskip 0.2in
\noindent 
{\bf Assumption A2.} The following inequality holds
\begin{equation}
\label{eqn1}
\omega :=\sup_{t\ge 0} \bigg(\frac{\beta(t)}{\alpha(t)}\bigg)^\frac{1}{p} \frac{1}{\|U(t)\|} 
< \frac{1}{\bigg[(p-1)\int_0^\infty \|U^{-1}(\xi)\|\|U(\xi)\|^p \alpha(\xi) \,d\xi \bigg]^{\frac{1}{p-1}}} - \|u(0)\|,
\end{equation}
where $U(t)$ and $U^{-1}(t)$ are defined by \eqref{2e2} and \eqref{2e3} 
while $\alpha(t)$ and $\beta(t)$ are defined from \eqref{ae5} and \eqref{ae6}.

\begin{remark}{\rm 
It follows from \eqref{eqn1} that
$$
\|u(0)\| + \omega < \frac{1}{\bigg[(p-1)\int_0^\infty \|U^{-1}(\xi)\|\|U(\xi)\|^p \alpha(\xi) \,d\xi \bigg]^{\frac{1}{p-1}}}.
$$
This implies
\begin{equation}
\label{eq11}
\frac{1}{\big(\|u(0)\| + \omega\big)^{p-1}} > (p-1)\int_0^\infty \|U^{-1}(\xi)\|\|U(\xi)\|^p \alpha(\xi) \,d\xi.
\end{equation}
Moreover, from the definition of $\omega$ in \eqref{eqn1} one gets
\begin{equation}
\label{eqs6}
\frac{\beta(t)}{\alpha(t) \|U(t)\|^p} \le \sup_{t\ge 0}\frac{\beta(t)}{\alpha(t) \|U(t)\|^p} = \omega^p,\qquad t\ge 0.
\end{equation}
}
\end{remark}

Let $V:[0,\infty)\to \mathcal{X}$ be a differentiable function of $t$ with values in $\mathcal{X}$. 
From the triangle inequality one gets
$$
\bigg|\|V(t+\delta)\| - \|V(t)\| \bigg|\le \|V(t+\delta) - V(t)\|,\qquad t\ge 0.
$$
This implies
\begin{equation}
\label{z32}
\frac{d}{dt} \|V(t)\| \le \bigg{\|} \frac{d}{dt} V(t)\bigg{\|},\qquad t\ge 0,
\end{equation}
where the derivative of $\|V(t)\|$ at its zeros is understood as the right derivative. 

\begin{theorem}
\label{thm2.3}
Let Assumptions {\bf A} and {\bf A2} hold. 
Then the solution $u(t)$ to problem \eqref{eq1} exists globally and satisfies
\begin{equation}
\label{eqa7}
\|u(t)\| \le C_2 \|U(t)\|,\qquad t\ge 0,\quad C_2 = const>0.
\end{equation}
Consequently, 
\begin{enumerate}
\item[i)]{}if
\begin{equation}
\label{eq10}
\sup_{t\ge 0} \|U(t)\| < \infty,
\end{equation}
then the solution $u(t)$ is bounded;
\item[ii)]{}if
\begin{equation}
\label{eq12.0}
\lim_{t\to\infty} \|U(t)\| = 0,
\end{equation}
then
\begin{equation}
\label{eq12}
\lim_{t\to\infty} u(t) = 0.
\end{equation}
\end{enumerate}
\end{theorem}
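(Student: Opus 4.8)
The plan is to convert the integral inequality \eqref{zx1} into a scalar Bihari-type (nonlinear Gronwall) inequality for the normalized quantity $\psi(t):=\|u(t)\|/\|U(t)\|$ and then read off the bound \eqref{eqa7} directly from Assumption {\bf A2}. First I would divide \eqref{zx1} by $\|U(t)\|$ and use $\|U(t)U^{-1}(\xi)\|\le\|U(t)\|\,\|U^{-1}(\xi)\|$ together with $\|u(\xi)\|^p=\psi(\xi)^p\|U(\xi)\|^p$ to obtain
\begin{equation*}
\psi(t)\le \|u_0\| + \int_0^t \|U^{-1}(\xi)\|\,\|U(\xi)\|^p\,\alpha(\xi)\,\psi(\xi)^p\,d\xi + \int_0^t \|U^{-1}(\xi)\|\,\beta(\xi)\,d\xi .
\end{equation*}
Writing $h(\xi):=\|U^{-1}(\xi)\|\,\|U(\xi)\|^p\,\alpha(\xi)$ and invoking \eqref{eqs6} in the form $\beta(\xi)\le \omega^p\alpha(\xi)\|U(\xi)\|^p$ bounds the last integral by $\omega^p\int_0^t h(\xi)\,d\xi$.

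Next I would exploit the elementary superadditivity inequality $a^p+b^p\le(a+b)^p$, valid for $a,b\ge 0$ and $p\ge 1$, with $a=\psi(\xi)$ and $b=\omega$. This merges the two integrals into a single $p$-th power term and, after adding $\omega$ to both sides, yields the clean inequality
\begin{equation*}
\eta(t)\le \|u_0\|+\omega + \int_0^t h(\xi)\,\eta(\xi)^p\,d\xi,\qquad \eta(t):=\psi(t)+\omega .
\end{equation*}
Setting $\Phi(t)$ equal to the right-hand side gives $\eta\le\Phi$, $\Phi(0)=\|u_0\|+\omega$, and $\Phi'(t)=h(t)\eta(t)^p\le h(t)\Phi(t)^p$; dividing by $\Phi^p$ and integrating from $0$ to $t$ produces
\begin{equation*}
\Phi(t)^{1-p}\ge (\|u_0\|+\omega)^{1-p}-(p-1)\int_0^t h(\xi)\,d\xi .
\end{equation*}

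The decisive point, which I expect to be the crux, is that Assumption {\bf A2}, rewritten as \eqref{eq11}, says precisely $(\|u_0\|+\omega)^{1-p}>(p-1)\int_0^\infty h(\xi)\,d\xi$, so the right-hand side above is bounded below by a fixed positive constant uniformly in $t$. Since $x\mapsto x^{1-p}$ is decreasing on $(0,\infty)$, this forces $\eta(t)\le C_2$ with
\begin{equation*}
C_2:=\Big[(\|u_0\|+\omega)^{1-p}-(p-1)\int_0^\infty h(\xi)\,d\xi\Big]^{-\frac{1}{p-1}},
\end{equation*}
whence $\|u(t)\|=\psi(t)\|U(t)\|\le \eta(t)\|U(t)\|\le C_2\|U(t)\|$, which is \eqref{eqa7}. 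The same a priori bound supplies global existence by the continuation argument already used for Theorem \ref{thm2.1}: on the maximal interval $[0,\tilde T)$ the estimate holds, and were $\tilde T<\infty$, continuity of $U$ would make $\|U(t)\|$, hence $\|u(t)\|$, bounded on $[0,\tilde T)$, so $u$ would remain in a bounded set and Assumption {\bf A} would let us continue past $\tilde T$, contradicting maximality; therefore $\tilde T=\infty$. Finally, consequences i) and ii) are immediate from \eqref{eqa7}: under \eqref{eq10} the right-hand side is bounded, and under \eqref{eq12.0} it tends to zero, which gives \eqref{eq12}.
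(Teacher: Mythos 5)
Your proof is correct, and it rests on the same core computation as the paper's: normalize by $U$, absorb the $\beta$-term using \eqref{eqs6} together with the superadditivity inequality $a^p+b^p\le(a+b)^p$, integrate the resulting Bernoulli-type inequality, and invoke \eqref{eq11} (Assumption {\bf A2}) to keep the denominator positive uniformly in $t$. The difference is the vehicle. The paper argues differentially: it uses the exact identity \eqref{eq13*}, $\frac{d}{dt}\big(U^{-1}(t)u(t)\big)=U^{-1}(t)\big(G(t,u)+f(t)\big)$, together with the norm-derivative inequality \eqref{z32}, to get a pointwise differential inequality for $\|U^{-1}(t)u(t)\|$, which it then integrates. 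You argue integrally: starting from \eqref{zx1}, you apply $\|U(t)U^{-1}(\xi)\|\le\|U(t)\|\,\|U^{-1}(\xi)\|$, divide by $\|U(t)\|$, and run a Bihari/Gronwall majorant argument on $\psi(t)=\|u(t)\|/\|U(t)\|$ (note $\psi(t)\le\|U^{-1}(t)u(t)\|$, so your controlled quantity is the smaller of the two). What your route buys is that you never differentiate a norm, so the right-derivative convention behind \eqref{z32} is not needed; the cost is the auxiliary majorant $\Phi$, whose differentiation is only almost-everywhere since $\alpha\in L^1_{loc}$ --- a standard and harmless point, and the paper's differential inequality has the same caveat. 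The constants essentially coincide: the paper's constant is $M_3^{1/(p-1)}-\omega$, which is your $C_2$ minus $\omega$; you gave away the $\omega$ by bounding $\psi\le\eta$ rather than $\psi=\eta-\omega$. Your continuation argument for global existence is the paper's, and it shares the paper's implicit step that boundedness of $\|u\|$ on the maximal interval yields a well-defined value at its right endpoint to which Assumption {\bf A} can be applied; likewise, your use of $\beta(t)\le\omega^p\alpha(t)\|U(t)\|^p$ and the division by $\Phi^p$ tacitly assume $\alpha(t)>0$ and $\|u_0\|+\omega>0$, degeneracies equally present in the paper's own proof.
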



\begin{proof}
Let us prove that the solution $u(t)$ to equation \eqref{eq1} exists globally. Assume the contrary that the maximal interval of existence of $u(t)$ is a finite interval, namely, $[0,T)$. 
From inequality \eqref{z32} with $V(t)$ replaced by $U^{-1}(t)u(t)$, equation \eqref{eq13*}, inequality \eqref{z10}, 
and the identity $u(t) = U(t)U^{-1}(t)u(t)$, one gets
\begin{equation*}
\begin{split}
\frac{d}{dt}\|U^{-1}(t)u(t)\| &\le \bigg\|\frac{d}{dt}\bigg(U^{-1}(t)u(t)\bigg)\bigg\| = \bigg\| U^{-1}(t)\bigg(G(t,u) + f(t)\bigg)\bigg\|\\
& \le \| U^{-1}(t)\| \bigg(\|G(t,u)\| + \|f(t)\|\bigg)\\
&\le \|U^{-1}(t)\|\bigg(\alpha(t)\|u(t)\|^p + \beta(t)\bigg)\\
& = \|U^{-1}(t)\|\bigg(\alpha(t)\|U(t)U^{-1}(t)u(t)\|^p + \beta(t)\bigg).
\end{split}
\end{equation*}
This and the inequality $\|U(t)U^{-1}(t)u(t)\| \le \|U(t)\| \|U^{-1}(t)u(t)\|$ imply
\begin{equation}
\label{dec5}
\begin{split}
\frac{d}{dt}\|U^{-1}(t)u(t)\| 
&\le \|U^{-1}(t)\|\bigg(\alpha(t)\|U(t)\|^p\|U^{-1}(t)u(t)\|^p + \beta(t)\bigg)\\
&= \alpha(t) \|U^{-1}(t)\| \|U(t)\|^p\bigg(\|U^{-1}(t)u(t)\|^p + \frac{\beta(t)}{\alpha(t)\|U(t)\|^p}\bigg).
\end{split}
\end{equation}
Inequalities \eqref{dec5} and \eqref{eqs6} and the inequality $a^p + b^p \le (a+b)^p,\,\forall a,b\ge 0, p>1$ imply
\begin{equation}
\label{zx4}
\begin{split}
\frac{d}{dt}\|U^{-1}(t)u(t)\|
&\le \alpha(t)\|U^{-1}(t)\|\|U(t)\|^p\bigg(\|U^{-1}(t)u(t))\|^p + w^p\bigg)\\
&\le \alpha(t) \|U^{-1}(t)\|\|U(t)\|^p\bigg(\|U^{-1}(t)u(t)\| + w\bigg)^p,\qquad 0\le t< T.
\end{split}
\end{equation}
Inequality \eqref{zx4} can be rewritten as
$$
\frac{d}{dt}\bigg( \frac{\big(\|U^{-1}(t)u(t)\| + \omega\big)^{1-p} }{1-p} \bigg) \le \alpha(t)\|U^{-1}(t)\|\|U(t)\|^p,\qquad 0\le t< T.
$$
Integrate this inequality from 0 to $t$ and use $U(0) = I$ to get
\begin{equation*}
\frac{\big(\|U^{-1}(t)u(t)\| + \omega\big)^{1-p} - \big(\|u(0)\| + \omega\big)^{1-p}}{1-p}  
\le \int_0^t \alpha(\xi)\|U^{-1}(\xi)\|\|U(\xi)\|^p\, d\xi, 
\end{equation*}
for all $t\in [0,T)$. 
This implies
\begin{equation}
\label{eqx20}
\big(\|U^{-1}(t)u(t)\| + \omega\big)^{p-1}  \le \frac{1}{(\|u(0)\| + \omega)^{1-p} - (p-1) \int_0^t \alpha(\xi) \|U^{-1}(\xi)\| \|U(\xi)\|^p\, d\xi}, 
\end{equation}
for all $t\in [0,T)$. 
Inequality \eqref{eq11} implies that the right-hand side of \eqref{eqx20} is well-defined for all $t\ge 0$. 
Thus, from \eqref{eqx20} one gets
\begin{equation*}
\begin{split}
\big(\|U^{-1}(t)u(t)\| + \omega\big)^{p-1}  
&\le \sup_{t\ge 0} \frac{1}{(\|u(0)\| + \omega)^{1-p} - (p-1) \int_0^t \alpha(\xi) \|U^{-1}(\xi)\| \|U(\xi)\|^p\, d\xi}\\
&= \frac{1}{(\|u(0)\| + \omega)^{1-p} - (p-1) \int_0^\infty \alpha(\xi)\|U^{-1}(\xi)\|\|U(\xi)\|^p\, d\xi}:=M_3,
\end{split}
\end{equation*}
for all $t\in [0,T)$. 
This implies
\begin{equation}
\label{eq18}
\|U^{-1}(t)u(t)\| \le M_3^{\frac{1}{p-1}} - \omega,\qquad 0\le t<T.
\end{equation}
It follows from \eqref{eq18} that
\begin{equation}
\label{eq19}
\begin{split}
\|u(t)\| & = \|U(t)U^{-1}(t)u(t)\| \\
&\le \|U(t)\|\|U^{-1}(t)u(t)\| \le \|U(t)\|\bigg(M_3^{\frac{1}{p-1}} - \omega\bigg) ,\qquad 0\le t< T. 
\end{split}
\end{equation}
Inequality \eqref{eq19} and the continuity of $u(t)$ imply that $\|u(T)\|\le \|U(T)\|\big(M_3^{\frac{1}{p-1}} - \omega\big)$. This and Assumption {\bf A} imply the existence of $u(t)$ on a larger interval, say, $[0,T+\delta)$ for some $\delta>0$. This contradicts the definition of $[0,T)$ as the maximal interval of existence of $u(t)$. The contradiction implies that $T=\infty$. This and inequality \eqref{eq19} imply inequality \eqref{eqa7}. 

If inequality \eqref{eq10} holds, then it follows from inequality \eqref{eqa7} that the solution $u(t)$ is bounded. 

If equation \eqref{eq12.0} holds, then relation \eqref{eq12} follows directly from inequality \eqref{eqa7}. Theorem \ref{thm2.3} is proved. 
\end{proof}

\begin{remark}{\rm
As we have mentioned earlier, inequality \eqref{eq10} is necessary for the boundedness of $u(t)$. In addition, if the right-hand side of \eqref{eqn1} is not positive, then the solution $u(t)$ may blow up at a finite time. 
Let us verify this claim by considering the following first-order ordinary equation:
\begin{equation}
\label{l5}
\dot{u} = \gamma(t)u(t) + \alpha(t)u^p(t),\qquad t\ge 0,\qquad u(0) = u_0>0. 
\end{equation}
One can verify that the solution to the equation
$$
\dot{U}(t) = \gamma(t)U(t),\qquad U(0) = 1,
$$
is
$$
U(t) = e^{\int_0^t \gamma(\xi)\, d\xi}.
$$
In addition $U^{-1}(t) = e^{-\int_0^t \gamma(\xi)\, d\xi}$ and we have
\begin{equation}
\label{beq40}
\frac{d}{dt}U^{-1}(t) = -\gamma(t)U^{-1}(t),\qquad t\ge 0.
\end{equation}
It follows from the product rule, equation \eqref{beq40}, and equation \eqref{l5} that
\begin{equation*}
\begin{split}
\frac{d}{dt}\big(U^{-1}(t)u(t)\big) &= u(t)\frac{d}{dt}U^{-1}(t) + U^{-1}(t)\frac{d}{dt}u(t)\\
&= -\gamma(t)U^{-1}(t)u(t) + U^{-1}(t)[\gamma(t)u(t) + \alpha(t)u^p(t)]\\
&= U^{-1}(t)\alpha(t)u^p(t) = U^{p-1}(t)\alpha(t)\big[U^{-1}(t)u(t)\big]^p. 
\end{split}
\end{equation*}
This implies
$$
\frac{1}{\big[U^{-1}(t)u(t)\big]^p}\cdot\frac{d}{dt}\big(U^{-1}(t)u(t)\big) =  U^{p-1}(t)\alpha(t).
$$
Integrate this equation from 0 to $t$ to get
$$
\frac{\big(U^{-1}(t)u(t)\big)^{1-p} - \big(U^{-1}(0)u(0)\big)^{1-p}}{1-p}  =  \int_0^t U^{p-1}(\xi)\alpha(\xi)\, d\xi.
$$
From this equation one obtains
$$
u(t)=\tilde{u}(t) := U(t)\bigg(\frac{1}{u^{1-p}_0 - (p-1) \int_0^t U^{p-1}(\xi)\alpha(\xi)\, d\xi}\bigg)^{\frac{1}{p-1}}.
$$
The function $\tilde{u}(t)$ blows up at a finite time $t=t_0$ if $t_0$ is the solution to the equation 
$$
0=\frac{1}{u^{p-1}_0} - (p-1) \int_0^t U^{p-1}(\xi)\alpha(\xi)\, d\xi := \varphi(t).
$$
The function $\varphi$ is a continuous function on $[0,\infty)$, $\varphi(0) = 1/u_0^{p-1}>0$. Thus, by the Intermediate Value Theorem $\varphi(t) = 0$ has a solution $t=t_0$ if $\lim_{t\to\infty}\varphi(t)<0$, i.e., 
$$
\frac{1}{u^{p-1}_0} - (p-1) \int_0^\infty U^{p-1}(\xi)\alpha(\xi)\, d\xi <0.
$$
This inequality is equivalent to
$$
\frac{1}{\bigg[(p-1) \int_0^\infty U^{p-1}(\xi)\alpha(\xi)\, d\xi\bigg]^{\frac{1}{p-1}}} - u_0 < 0.
$$

Thus, the condition that the right-hand side of \eqref{eqn1} is positive is not a restrictive one. When the right-hand side of \eqref{eqn1} is positive, 
\eqref{eqn1} holds true if the fraction $\beta(t)/\alpha(t)$ is sufficiently small. 
In particular, it holds true if $\beta(t)\equiv 0$, i.e., $f$ is absent from equation \eqref{eq1}. 
}
\end{remark}


From Theorem \ref{thm2.3} we have the following corollary:

\begin{corollary}
\label{corrollary2.5}
Let Assumption {\bf A} hold. 
Assume that 
\begin{equation}
\label{nov3}
w:=\sup_{t\ge 0} \bigg(\frac{\beta(t)}{\alpha(t)}\bigg)^\frac{1}{p} \frac{1}{\|U(t)\|} < \bigg(\frac{1}{(p-1)\int_0^\infty \alpha(\xi)\|U^{-1}(\xi)\| \|U(\xi)\|^p\, d\xi}\bigg)^{\frac{1}{p-1}},\qquad p>1. 
\end{equation}
If $\|u_0\|$ is sufficiently small so that
\begin{equation}
\label{nov4}
\|u_0\| < \bigg(\frac{1}{(p-1)\int_0^\infty \alpha(\xi)\|U^{-1}(\xi)\| \|U(\xi)\|^p\, d\xi}\bigg)^{\frac{1}{p-1}} - \omega,
\end{equation}
then the solution $u(t)$ to problem \eqref{eq1} exists globally and satisfies the estimate
\begin{equation}
\label{q55}
\|u(t)\| \le C_2 \|U(t)\|,\qquad t\ge 0,\qquad C_2 = const>0.
\end{equation}
Consequently, 
\begin{enumerate}
\item[(i)]{}
if
\begin{equation}
\label{eq2020x1}
\sup_{t\ge 0} \|U(t)\| < \infty,
\end{equation}
then the solution $u(t)$ is bounded;
\item[(ii)]{}
if
\begin{equation*}
\label{}
\lim_{t\to\infty}\|U(t)\| = 0,
\end{equation*}
then 
\begin{equation*}
\lim_{t\to\infty} u(t) = 0.
\end{equation*}
\end{enumerate}
\end{corollary}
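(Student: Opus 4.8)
The plan is to recognize that this corollary is essentially a reformulation of Theorem \ref{thm2.3}, so almost all of the work is already done; the only genuine tasks are to check an algebraic equivalence and to confirm that the hypotheses are non-vacuous. First I would observe that the quantity $w$ defined in \eqref{nov3} and the quantity $\omega$ appearing in \eqref{nov4} are one and the same, both equal to $\sup_{t\ge 0}(\beta(t)/\alpha(t))^{1/p}\|U(t)\|^{-1}$; thus the two displayed conditions involve a single number, which I will denote $\omega$ throughout.

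Next I would show that hypothesis \eqref{nov4} is nothing but a rearrangement of Assumption {\bf A2}. Writing $B:=\big[(p-1)\int_0^\infty \alpha(\xi)\|U^{-1}(\xi)\|\|U(\xi)\|^p\,d\xi\big]^{-1/(p-1)}$ for the common right-hand bound, inequality \eqref{nov4} reads $\|u_0\|<B-\omega$, which is equivalent to $\|u_0\|+\omega<B$; recalling $u(0)=u_0$, this is exactly inequality \eqref{eqn1} of Assumption {\bf A2} after transposing $\|u(0)\|$ to the left-hand side. Hence, under \eqref{nov4}, Assumption {\bf A2} holds, and together with the standing Assumption {\bf A} all the hypotheses of Theorem \ref{thm2.3} are in force.

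The role of condition \eqref{nov3} is only to guarantee that the set of admissible initial data is non-empty. Since \eqref{nov3} asserts $\omega<B$, the bound $B-\omega$ on the right-hand side of \eqref{nov4} is strictly positive, so there is a genuine range $0\le\|u_0\|<B-\omega$ of initial data for which \eqref{nov4}, and therefore Assumption {\bf A2}, can be satisfied; this is the precise meaning of the phrase ``$\|u_0\|$ sufficiently small''. Note that because $\|u_0\|\ge 0$, inequality \eqref{nov4} already implies \eqref{nov3}, so \eqref{nov3} is not an additional restriction but merely records that the admissible range is non-degenerate.

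Finally I would simply invoke Theorem \ref{thm2.3}: it yields global existence of $u(t)$ together with the estimate $\|u(t)\|\le C_2\|U(t)\|$, which is precisely inequality \eqref{q55} (identical in form to \eqref{eqa7}). Assertions (i) and (ii) then follow exactly as in Theorem \ref{thm2.3}: if $\sup_{t\ge0}\|U(t)\|<\infty$ the estimate bounds $u(t)$, and if $\|U(t)\|\to 0$ the same estimate forces $u(t)\to 0$. I do not anticipate any real obstacle here; the only point requiring care is the bookkeeping of the algebraic equivalence between \eqref{nov4} and \eqref{eqn1}, everything else being inherited directly from Theorem \ref{thm2.3}.
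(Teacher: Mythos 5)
Your proposal is correct and follows exactly the paper's own route: the paper likewise observes that \eqref{nov3} and \eqref{nov4} together amount to inequality \eqref{eqn1} of Assumption \textbf{A2} and then invokes Theorem \ref{thm2.3}. Your additional remarks (that $w$ and $\omega$ denote the same quantity, that \eqref{nov4} alone already implies \eqref{nov3}, and that \eqref{nov3} serves to make the admissible range of $\|u_0\|$ non-empty) are accurate clarifications of the same argument rather than a different approach.
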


\begin{proof}
It follows from inequalities \eqref{nov3} and \eqref{nov4} that inequality \eqref{eqn1} in Assumption A2 holds true. Consequently, Corollary \ref{corrollary2.5} follows from Theorem \ref{thm2.3}. 
\end{proof}

\begin{remark}
{\rm 
Assume that
\begin{equation}
\label{fe09}
\int_0^\infty \alpha(\xi)\|U^{-1}(\xi)\| \|U(\xi)\|^p\, d\xi < \infty.
\end{equation}
Consider equation \eqref{eq1} with $f(t) \equiv 0$. Then $\beta(t)\equiv 0$ and inequality \eqref{nov3} holds true. Note that in this case, $u(t) \equiv 0$ is an equilibrium solution. If $\lim_{t\to\infty}\|U(t)\| = 0$ and $u(0)=u_0$ satisfies inequality \eqref{nov4} with $\omega =0$, then it follows from Corollary \ref{corrollary2.5} that $\lim_{t\to\infty}u(t) = 0$. This means that the equilibrium solution $u(t) = 0$ of the equation $\dot{u} = F(t,u)$ is asymptotically stable if inequality \eqref{fe09} holds and $\lim_{t\to\infty}\|U(t)\| = 0$. 
}
\end{remark}

\begin{corollary}
\label{cor2.9}
Let Assumption {\bf A} hold. Assume that 
\begin{equation}
\label{nov1}
\lim_{t\to\infty} \|U(t)\| = 0,\qquad \int_0^\infty \alpha(t)\, dt <\infty \qquad\|U^{-1}(t)\| \le \frac{c}{\|U(t)\|},\qquad \forall t\ge 0,
\end{equation}
where $c>0$. 
Then the solution $v(t)\equiv0$ to the equation $\dot{v} = F(t,v)$, $t\ge 0$, $v(0) = 0$ is asymptotically stable under persistently acting perturbations $f$. 
Namely, if $\|f(t)\|$ is sufficiently small, then there exists $\delta>0$ such that if $\|u(0)\|<\delta$, then $\lim_{t\to\infty}\|u(t)\|=0$ where $u(t)$ is the solution to equation \eqref{eq1}.
\end{corollary}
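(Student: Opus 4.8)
The plan is to derive the corollary directly from Corollary~\ref{corrollary2.5} by verifying that its hypotheses \eqref{nov3} and \eqref{nov4} can be satisfied once $f$ and $u_0$ are taken small. The first step is to show that the integral controlling those inequalities is finite. Using the third condition in \eqref{nov1}, namely $\|U^{-1}(t)\|\le c/\|U(t)\|$, one estimates
\[
\int_0^\infty \alpha(\xi)\|U^{-1}(\xi)\|\|U(\xi)\|^p\,d\xi \le c\int_0^\infty \alpha(\xi)\|U(\xi)\|^{p-1}\,d\xi.
\]
Since $\lim_{t\to\infty}\|U(t)\|=0$ and $\|U(t)\|$ is continuous, the quantity $M_0:=\sup_{t\ge 0}\|U(t)\|$ is finite, so the right-hand side is bounded by $cM_0^{p-1}\int_0^\infty\alpha(\xi)\,d\xi<\infty$ by the second condition in \eqref{nov1}. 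Denote the resulting finite integral by $I$ and set $\rho:=\big((p-1)I\big)^{-1/(p-1)}>0$; this is precisely the positive number on the right-hand side of \eqref{nov3} and \eqref{nov4}.

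The second step is to control $\omega$. The quantity $w=\omega=\sup_{t\ge 0}(\beta(t)/\alpha(t))^{1/p}/\|U(t)\|$ depends on $f$ only through $\beta=\|f\|$. I would make the phrase ``$\|f(t)\|$ is sufficiently small'' precise as the weighted requirement $\beta(t)\le \varepsilon^p\alpha(t)\|U(t)\|^p$ for all $t\ge 0$, under which $\omega\le\varepsilon$. Choosing $\varepsilon<\rho/2$ then yields $\omega<\rho/2<\rho$, which is exactly \eqref{nov3}. With $\omega$ thus fixed, set $\delta:=\rho-\omega$, so that $\delta>\rho/2>0$; if $\|u_0\|<\delta$ then $\|u_0\|<\rho-\omega$, which is inequality \eqref{nov4}. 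Hence Corollary~\ref{corrollary2.5} applies: the solution $u(t)$ exists globally and satisfies $\|u(t)\|\le C_2\|U(t)\|$ as in \eqref{q55}. Invoking part (ii) of that corollary together with $\lim_{t\to\infty}\|U(t)\|=0$ gives $\lim_{t\to\infty}u(t)=0$, which is the asserted asymptotic stability.

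I expect the only delicate point to be the interpretation of the smallness of $f$. Because $\|U(t)\|\to 0$, the weight $1/\|U(t)\|$ entering $\omega$ is unbounded, so $\omega$ is \emph{not} controlled by an ordinary supremum bound on $\|f(t)\|$; one must instead demand that $\|f(t)\|$ decay at least as fast as $\alpha(t)\|U(t)\|^p$. Checking that this weighted smallness indeed forces $\omega$ below $\rho$, so that \eqref{nov3} and \eqref{nov4} hold simultaneously with a positive $\delta$, is the crux of the argument; everything else reduces to a direct application of Corollary~\ref{corrollary2.5}.
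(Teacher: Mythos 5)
Your proposal is correct and follows essentially the same route as the paper's own proof: bound $\int_0^\infty \alpha(\xi)\|U^{-1}(\xi)\|\|U(\xi)\|^p\,d\xi$ by $c\sup_{t\ge0}\|U(t)\|^{p-1}\int_0^\infty\alpha(\xi)\,d\xi<\infty$, impose the weighted smallness $\beta(t)\lesssim \alpha(t)\|U(t)\|^p$ to force $\omega$ below half the right-hand side of \eqref{nov3}, take $\|u_0\|$ small enough for \eqref{nov4}, and conclude via Corollary \ref{corrollary2.5}(ii). The "delicate point" you flag (that smallness of $f$ must be measured against the weight $\alpha(t)\|U(t)\|^p$, not in sup norm) is exactly how the paper formulates its condition on $\beta$.
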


\begin{proof}
It follows from the third inequality in \eqref{nov1} that 
\begin{equation}
\label{nov2}
\int_0^\infty \alpha(t) \|U^{-1}(t)\|\|U(t)\|^p\, dt \le 
c\int_0^\infty \alpha(t) \|U(t)\|^{p-1}\, dt.
\end{equation}
Since $\|U(t)\|$ is a continuous function of $t$, it follows from the relation $\lim_{t\to\infty} \|U(t)\| = 0$ in \eqref{nov1}  that $\|U(t)\|$ is bounded on $[0,\infty)$. This and the second inequality in \eqref{nov1} imply that the integral in the right-hand side of \eqref{nov2} is finite. Thus, from inequality \eqref{nov2} one gets
\begin{equation*}
\int_0^\infty \alpha(\xi) \|U^{-1}(\xi)\|\|U(\xi)\|^p\, d\xi  < \infty. 
\end{equation*}
Therefore, the right-hand side of inequality \eqref{nov3} is a finite positive constant. 
Let $\beta(t)=\|f(t)\|$ be sufficiently small so that 
$$
0 \le \beta(t) < \alpha(t)\|U(t)\|^p \bigg(\frac{1}{2}\bigg)^p\bigg(\frac{1}{(p-1)\int_0^\infty \alpha(\xi)\|U^{-1}(\xi)\| \|U(\xi)\|^p\, d\xi}\bigg)^{\frac{p}{p-1}},\qquad t\ge 0.
$$
This implies 
$$
\bigg(\frac{\beta(t)}{\alpha(t)}\bigg)^{\frac{1}{p}}\frac{1}{\|U(t)\|} < \frac{1}{2}\bigg(\frac{1}{(p-1)\int_0^\infty \alpha(\xi)\|U^{-1}(\xi)\| \|U(\xi)\|^p\, d\xi}\bigg)^{\frac{1}{p-1}},\qquad t\ge 0.
$$
Inequality \eqref{nov3} follows from this inequality. 
If $\|u_0\|$ is sufficiently small, then inequality \eqref{nov4} follows from inequality \eqref{nov3}. 
Therefore, by Corollary \ref{corrollary2.5}, inequality \eqref{q55} holds. From inequality \eqref{q55} and the first relation in \eqref{nov1} one gets 
$\lim_{t\to\infty}\|u(t)\| = 0$. This completes the proof of Corollary \ref{cor2.9}. 
\end{proof}

Now we are interested in the following question: Given the perturbation $f(t)$, under what conditions on the nonlinear part of $F(t,u)$, in general, and on the function $\alpha(t)$ in inequality \eqref{ae5}, in particular, does the solution to problem \eqref{eq1} exist globally, is the solution  bounded, and does the solution decay to zero as $t\to\infty$? An answer to this question is given in Assumption {\bf A3} and is justified in Theorem \ref{thm2.4} below.  

\vskip 0.2in
\noindent
{\bf Assumption A3.}
Let $\alpha(t)\ge 0$ satisfy the inequality
\begin{equation}
\label{eqs9}
\alpha(t) \le \frac{\kappa\beta(t)}{[(\kappa+1)\zeta(t)]^p},\qquad t\ge 0,\quad \kappa>0,\quad p>1,
\end{equation}
where
\begin{equation}
\label{eq22}
\zeta(t) := \|U(t)\|\|u(0)\| + \int_0^t \|U(t)U^{-1}(\xi)\|\beta(\xi)\, d\xi.  
\end{equation}
\vskip 0.2in

\begin{theorem}
\label{thm2.4}
Assume that assumptions {\bf A} and {\bf A3} hold and that $u(0)\not = 0$. 
Then the solution $u(t)$ to problem \eqref{eq1} exists globally and 
\begin{equation}
\label{eq23bn}
\|u(t)\| < (\kappa + 1)\zeta(t),\qquad  \forall t\ge 0,\quad \kappa>0.
\end{equation}
Consequently,
\begin{enumerate}
\item[(i)]{
if the function $\zeta(t)$ is bounded on $[0,\infty)$, 
then the solution $u(t)$ to problem \eqref{eq1} is bounded;}
\item[(ii)]{
if $\lim_{t\to\infty}\zeta(t) = 0$, 
then 
\begin{equation*}
\lim_{t\to\infty} u(t) = 0.
\end{equation*}}
\end{enumerate}
\end{theorem}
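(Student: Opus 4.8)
The plan is to run the same continuity (bootstrap) argument used in the proof of Theorem \ref{thm2.1}, but now tracking the moving bound $(\kappa+1)\zeta(t)$ in place of the constant $\epsilon$. The starting point is the integral inequality \eqref{zx1}, which in view of the definition \eqref{eq22} of $\zeta(t)$ reads
\begin{equation*}
\|u(t)\| \le \zeta(t) + \int_0^t \|U(t)U^{-1}(\xi)\|\alpha(\xi)\|u(\xi)\|^p\, d\xi.
\end{equation*}
First I would record two elementary facts used throughout. Since $u(0)\neq 0$ and $U(t)$ is invertible (so $\|U(t)\|>0$), we have $\zeta(t)\ge \|U(t)\|\|u(0)\|>0$ for every $t\ge 0$; this makes the right-hand side of \eqref{eqs9} well defined and yields the strict inequality $\zeta(0)=\|u(0)\|<(\kappa+1)\zeta(0)$ at $t=0$.

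The heart of the argument is a self-improving estimate. Suppose that on some interval $[0,t]$ one already knows $\|u(\xi)\|\le(\kappa+1)\zeta(\xi)$. Then Assumption {\bf A3}, i.e.\ $\alpha(\xi)[(\kappa+1)\zeta(\xi)]^p\le\kappa\beta(\xi)$, gives $\alpha(\xi)\|u(\xi)\|^p\le\kappa\beta(\xi)$, so that
\begin{equation*}
\int_0^t \|U(t)U^{-1}(\xi)\|\alpha(\xi)\|u(\xi)\|^p\, d\xi \le \kappa\int_0^t \|U(t)U^{-1}(\xi)\|\beta(\xi)\, d\xi = \kappa\big(\zeta(t)-\|U(t)\|\|u(0)\|\big).
\end{equation*}
Substituting into the displayed integral inequality gives
\begin{equation*}
\|u(t)\| \le (\kappa+1)\zeta(t) - \kappa\|U(t)\|\|u(0)\| < (\kappa+1)\zeta(t),
\end{equation*}
where the final inequality is strict precisely because $\kappa>0$, $\|U(t)\|>0$ and $u(0)\neq 0$. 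This is the mechanism that turns the hypothesis $u(0)\neq 0$ into the strict conclusion \eqref{eq23bn}, and it is the genuinely new ingredient: Assumption {\bf A3} is engineered so that the nonlinear contribution is absorbed into $\kappa$ times the linear/forcing part of $\zeta(t)$.

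With this self-improving estimate in hand, global existence follows by the contradiction scheme of Theorem \ref{thm2.1}. Let $[0,\tilde T)$ be the maximal interval of existence and suppose $\tilde T<\infty$. Let $T$ be the largest number with $\|u(t)\|\le(\kappa+1)\zeta(t)$ on $[0,T)$; it is positive by the strict inequality at $t=0$ and continuity. If $T<\tilde T$, continuity forces $\|u(T)\|=(\kappa+1)\zeta(T)$, while the self-improving estimate applied on $[0,T]$ gives $\|u(T)\|<(\kappa+1)\zeta(T)$, a contradiction; hence the bound persists up to $\tilde T$. Since $\zeta$ is continuous on $[0,\tilde T]$, $\|u(t)\|$ remains bounded as $t\to\tilde T$, so Assumption {\bf A} allows extension of the solution beyond $\tilde T$, contradicting maximality. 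Thus $\tilde T=\infty$, and applying the self-improving estimate at each $t\ge 0$ upgrades the bound to the strict inequality \eqref{eq23bn}. Claims (i) and (ii) are immediate from \eqref{eq23bn}.

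The step I expect to require the most care is the bookkeeping in the continuity argument, namely ensuring the bound passes to the closed endpoints $t=T$ and $t=\tilde T$ by continuity before reapplying the self-improving estimate, exactly as in the proof of Theorem \ref{thm2.1}. Everything else reduces to the algebraic cancellation produced by Assumption {\bf A3}.
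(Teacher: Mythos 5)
Your proposal is correct and follows essentially the same bootstrap/continuation argument as the paper: both start from inequality \eqref{zx1}, use Assumption {\bf A3} to absorb the nonlinear term into $\kappa$ times the forcing part of $\zeta(t)$, exploit $u(0)\neq 0$ (via $\|U(t)\|\|u(0)\|>0$) to get the strict inequality, and conclude global existence by the maximal-interval contradiction together with Assumption {\bf A}. The only differences are presentational: you isolate the self-improving estimate and run the bootstrap on the non-strict bound, whereas the paper tracks the strict bound via its largest time $T_1$ with $\|u(t)\|<(\kappa+1)\zeta(t)$, but the underlying algebra is identical.
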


\begin{proof}
Recall from our earlier assumptions that the functions $\alpha(t)$, $\beta(t)$, and $\|B(t)\|$ (cf. \eqref{defnB}) are in $L^1_{loc}([0,\infty))$. Thus, the integrals 
$\int_0^t \|U(\xi)\|\, d\xi$ and $\int_0^t \|U(t)U^{-1}(\xi)\| \beta(\xi)\, d\xi$ are well-defined for all $t\ge 0$. Therefore, the function $\zeta(t)$ in \eqref{eq22} is well-defined on $[0,\infty)$. 

{\it Let us prove that the solution $u(t)$ to problem \eqref{eq1} exists globally.} 
Assume the contrary that the maximal interval of existence of $u(t)$ is a finite interval $[0,T)$, $0<T<\infty$. 
Let us first show that
\begin{equation}
\label{eq23}
\|u(t)\| < (\kappa+1)\zeta(t),\qquad 0\le t< T.
\end{equation}
Since $U(0) = I$, it follows from \eqref{eq22} with $t=0$ that $\|u(0)\|= \zeta(0)<(\kappa+1)\zeta(0)$. This and the continuity of $\|u(t)\|$ and $\zeta(t)$ imply the existence of $\theta>0$ such that $\|u(t)\|< (\kappa+1)\zeta(t)$, $\forall t\in [0, \theta)$. 
Let $T_1\in (0,T]$ be the largest value such that
\begin{equation}
\label{eq24}
\|u(t)\| < (\kappa+1)\zeta(t),\qquad \forall t \in [0,T_1).
\end{equation}

Let us prove that $T_1= T$. Assume the contrary that $T_1<T$. From the continuity of $\|u(t)\|$ and the definition of $T_1$, one gets
\begin{equation}
\label{eqa9}
\|u(T_1)\| = (\kappa+1)\zeta(T_1),\qquad \|u(t)\|<(\kappa + 1)\zeta(t),\qquad  0\le t<T_1. 
\end{equation}
Inequalities \eqref{zx1}, \eqref{eq24}, and \eqref{eqs9}  imply
\begin{equation}
\label{eq28}
\begin{split}
\|u(t)\| &\le \|U(t)\|\|u(0)\| + \int_0^t \|U(t)U^{-1}(\xi)\|\bigg(\alpha(\xi)\big[(\kappa + 1)\zeta(\xi)\big]^p + \beta(\xi)\bigg)\, d\xi \\
&\le \|U(t)\| \|u(0)\| + \int_0^t \|U(t)U^{-1}(\xi)\|\bigg(\kappa\beta(\xi) + \beta(\xi)\bigg)\, d\xi \\
&= \|U(t)\| \|u(0)\| + (\kappa+1)\int_0^t \|U(t)U^{-1}(\xi)\| \beta(\xi) \, d\xi 
,\qquad \forall t\in [0,T_1).
\end{split}
\end{equation}
It follows from inequality \eqref{eq28} and the continuity of $u(t)$ that
\begin{align*}
\|u(T_1)\| &\le  \|U(T_1)\| \|u(0)\| + (\kappa+1)\int_0^{T_1} \|U(T_1)U^{-1}(\xi)\| \beta(\xi) \, d\xi \\
 &<  (\kappa+1)\|U(T_1)\| \|u(0)\| + (\kappa+1)\int_0^{T_1} \|U(T_1)U^{-1}(\xi)\| \beta(\xi) \, d\xi = (\kappa+1)\zeta(T_1).
\end{align*}
This inequality contradicts the first equality in \eqref{eqa9}. The contradiction implies that $T_1=T$, i.e., inequality \eqref{eq23} holds. 

Inequality \eqref{eq23} and the continuity of $u(t)$ imply that $\|u(T)\|$ is finite.
Thus, by using Assumption A with $t_0=T$ one obtains the existence of $u(t)$ on a larger interval, say, $[0,T+\delta)$ for some sufficiently small $\delta>0$. This contradicts the definition of $[0,T)$ as the maximal interval of existence of $u(t)$. The contradiction implies that $T=\infty$, i.e., the solution $u(t)$ to equation \eqref{eq1} exists globally. 

Inequality \eqref{eq23bn} follows from inequality \eqref{eq23} when $T=\infty$. 
It follows from inequality \eqref{eq23bn} that if $\zeta(t)$ is bounded on $[0,\infty)$, then the solution $u(t)$ to equation \eqref{eq1} is bounded and that if $\lim_{t\to\infty}\zeta(t) = 0$, then $\lim_{t\to\infty}u(t) = 0$. 
 Theorem \ref{thm2.4} is proved.  
\end{proof}

\begin{corollary}
\label{thm2.7}
Assume that assumptions {\bf A} and {\bf A3} hold and that $u(0)\not = 0$.
If
\begin{equation}
\label{eqs30s}
\sup_{t\ge 0}\|U(t)\| < \infty,  \qquad  \sup_{t \ge 0}\int_0^t \|U(t)U^{-1}(\xi)\|\beta(\xi)\, d\xi <\infty,
\end{equation}
then the solution $u(t)$ to problem \eqref{eq1} exists globally and is bounded. 
In addition, if
\begin{equation}
\label{eqs4s}
\lim_{t\to\infty}\|U(t)\| = 0,\qquad \lim_{t\to\infty}\int_0^t \|U(t)U^{-1}(\xi)\|\beta(\xi)\, d\xi = 0,
\end{equation}
then 
\begin{equation*}
\lim_{t\to\infty} u(t) = 0.
\end{equation*}
\end{corollary}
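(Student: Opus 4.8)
The plan is to deduce the corollary directly from Theorem \ref{thm2.4}, whose conclusions are already phrased in terms of the auxiliary function $\zeta(t)$ defined in \eqref{eq22}. Since Assumptions \textbf{A} and \textbf{A3} are in force and $u(0)\neq 0$, Theorem \ref{thm2.4} applies verbatim: the solution $u(t)$ exists globally and obeys $\|u(t)\| < (\kappa+1)\zeta(t)$ for all $t\ge 0$, together with the two implications that $\zeta$ bounded forces $u$ bounded, and $\zeta(t)\to 0$ forces $u(t)\to 0$. All that remains is to translate the two pairs of hypotheses \eqref{eqs30s} and \eqref{eqs4s} into the boundedness of $\zeta$ and the decay of $\zeta$, respectively.

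First I would recall the explicit form
\[
\zeta(t) = \|U(t)\|\,\|u(0)\| + \int_0^t \|U(t)U^{-1}(\xi)\|\beta(\xi)\, d\xi,
\]
and observe that it is precisely the sum of the two quantities appearing in \eqref{eqs30s} and \eqref{eqs4s}, the first merely scaled by the fixed constant $\|u(0)\|$. For part (i), the first inequality in \eqref{eqs30s} bounds $\|U(t)\|\,\|u(0)\|$ uniformly in $t$, while the second inequality in \eqref{eqs30s} bounds the integral term uniformly in $t$; adding these shows $\sup_{t\ge 0}\zeta(t)<\infty$. Thus $\zeta$ is bounded on $[0,\infty)$, and conclusion (i) of Theorem \ref{thm2.4} yields the boundedness of $u(t)$.

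For part (ii), under \eqref{eqs4s} the first relation gives $\|U(t)\|\,\|u(0)\|\to 0$ as $t\to\infty$ (again because $\|u(0)\|$ is a fixed constant), while the second relation states exactly that the integral term tends to $0$; hence $\lim_{t\to\infty}\zeta(t)=0$. Conclusion (ii) of Theorem \ref{thm2.4} then gives $\lim_{t\to\infty}u(t)=0$, completing the argument. I do not expect a genuine obstacle here, since the corollary is essentially a specialization of Theorem \ref{thm2.4}; the only points requiring care are to confirm that the hypotheses \eqref{eqs30s}--\eqref{eqs4s} control both summands of $\zeta$ separately and that the assumption $u(0)\neq 0$ is what permits Theorem \ref{thm2.4} to be invoked.
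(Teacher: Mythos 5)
Your proposal is correct and follows essentially the same route as the paper: the paper likewise observes that the two hypotheses in \eqref{eqs30s} (respectively \eqref{eqs4s}) control the two summands of $\zeta(t)$ in \eqref{eq22}, giving boundedness (respectively decay) of $\zeta$, and then invokes Theorem \ref{thm2.4}. Your write-up simply makes the paper's terse argument explicit, so there is nothing to add or correct.
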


\begin{proof}

If \eqref{eqs30s} holds, then it follows from \eqref{eq22} that $\zeta(t)$ is bounded on $[0,\infty)$. 
Similarly, if \eqref{eqs4s} holds, then $\lim_{t\to\infty}\zeta(t) =0$. 
Consequently, the conclusions of Corollary \ref{thm2.7} follow from Theorem \ref{thm2.4}. 
\end{proof}

\section{Equations in Hilbert spaces}
\label{section3}

Consider the equation
\begin{equation}
\label{jq1}
\dot{u} = A(t)u + F(t,u) + f(t),\qquad t\ge 0,\quad u(0) = u_0.
\end{equation}
Here, $F(t,u)$ is an operator function from $[0,\infty)\times \mathcal{H}$ to $\mathcal{H}$ where $\mathcal{H}$ is a Hilbert space, $f$ is a function on $[0,\infty)$ with values in $\mathcal{H}$, and $A(t)$ is a densely defined operator in $\mathcal{H}$ for any fixed $t\ge 0$. 
The operator $A(t)$ is not necessarily bounded in $\mathcal{H}$. 
Since the operator $A(t)$ is not necessarily bounded in $\mathcal{H}$, the stability results in the previous section are not applicable to equation \eqref{jq1}.
Assume that $F(t,0) = 0$. Thus, $u(t)\equiv 0$ is an equilibrium solution to the equation
\begin{equation*}
\dot{u} = A(t)u + F(t,u),\qquad t\ge 0. 
\end{equation*}

Let
\begin{equation*}
B(t):=F_u'(t,0):= F_u'(t,u)\big{|}_{u=0}
\end{equation*} 
where $F_u'(t,u)$ denotes the Fr\'echet derivative of $F(t,u)$ with respect to $u$. 
Then equation \eqref{jq1} can be written as
\begin{equation}
\label{jn1}
\dot{u} = B(t)u + A(t)u + G(t,u) + f(t),\qquad t\ge 0,\quad u(0) = u_0,
\end{equation}
where
$$
G(t,u):= F(t,u) - F'_u(t,0)u.
$$
Assume that
\begin{equation}
\label{dec3}
\|G(t,u)\| \le \alpha(t)\|u\|^{p},\qquad t\ge 0,\quad u\in \mathcal{H},\qquad p=q+1,\quad q>0,
\end{equation}
and
\begin{equation}
\label{dec4}
\|f(t)\| \le \beta(t),\qquad t\ge 0. 
\end{equation}
Here, $\|\cdot\|$ denotes the norm in $\mathcal{H}$. 

Again, since 
\begin{equation*}
F(t,u) = F(t,u) - F(t,0)=\int_0^1 F'_u(t,\xi u)u\, d\xi,
\end{equation*}
we have
\begin{equation}
\label{jn3}
G(t,u) =F(t,u) - F'_u(t,0)u= \int_0^1 \bigg[F'_u(t,\xi u) - F'_u(t,0)\bigg]u\, d\xi. 
\end{equation}
Thus, if $F'_u(t,u)$ is H\"{o}lder continuous of order $q>0$ with respect to $u$, i.e., 
$$
\|F'_u(t,v) - F'_u(t,w)\| \le (q+1)\alpha(t) \|v - w\|^{q},
$$
then from \eqref{jn3} we have
\begin{equation*}
\begin{split}
\|G(t,u)\|  &\le \int_0^1 \|F'_u(t,\xi u) - F'_u(t,0)\| \|u\|\, d\xi \le (q+1)\alpha(t) \int_0^1 \|\xi u\|^{q}\, d\xi\, \|u\|,\\
&\le (q+1)\alpha(t) \int_0^1 \xi^{q}\, d\xi\, \|u\|^{q+1} = \alpha(t)\|u\|^{p},\qquad p:=q+1.
\end{split}
\end{equation*}

Let $U(t)$ be the solution to the equation:
\begin{equation}
\label{q9}
\frac{d}{dt}U(t) = B(t)U(t),\qquad U(0) = I,
\end{equation}
where $I$ is the identity operator in $\mathcal{H}$.
This operator function $U(t)$ exists globally if $\|B(t)\|$ is in $L^1_{loc}[0,\infty)$ (see, e.g., \cite{DK}). The inverse operator $U^{-1}(t)$ exists for any fixed $t\ge 0$ and $U^{-1}(t)$ solves the equation (cf. \cite{DK})
\begin{equation}
\label{jn5}
\frac{d}{dt} U^{-1}(t) = - U^{-1}(t)B(t),\qquad U^{-1}(0) = I.
\end{equation}
Assume that 
\begin{equation}
\label{jn6}
\Real\langle [U^{-1}]^*U^{-1}Au,u\rangle \le 0,\qquad \forall u\in \Dom(A(t))\subset \mathcal{H},\quad t\ge 0.
\end{equation}
Here, $\langle \cdot ,\cdot\rangle$ denotes the inner product in $\mathcal{H}$ and $[U^{-1}]^*$ denotes the adjoint operator of $U^{-1}$.

Throughout this section, we assume that the following assumption holds. 
\vskip 0.15in
\noindent
{\bf Assumption B.} 
The equation 
\begin{equation*}
\dot{u} = A(t)u + F(t,u) + f(t),\qquad t\ge \tau \ge 0,\quad u(\tau) = u_\tau,
\end{equation*}
has a unique local solution for all $u_\tau \in \Dom(A(\tau))\subset \mathcal{H}$. 
\vskip 0.2in

From the product rule and equations \eqref{jn5} and \eqref{jn1} one gets
\begin{equation}
\label{j8}
\begin{split}
\frac{d}{dt}\bigg[U^{-1}(t)u(t)\bigg] & = \bigg[\frac{d}{dt}U^{-1}(t)\bigg] u(t) + U^{-1}(t)\frac{d}{dt} u(t) \\
&= -U^{-1}(t)B(t)u(t) + U^{-1}(t) \big[ B(t)u(t) + A(t)u + G(t,u) + f(t)\big]\\\
&= U^{-1}(t)A(t)u + U^{-1}(t)\big[G(t,u) + f(t)\big],\qquad t\ge 0.
\end{split}
\end{equation}
Take inner product of equation \eqref{j8} by $U^{-1}(t)u(t)$ to get
\begin{equation}
\label{j9}
\begin{split}
\big\langle  \frac{d}{dt} (U^{-1}u),U^{-1}u\big\rangle 
= \langle U^{-1}Au, U^{-1}u\rangle + \langle U^{-1}[G(t,u) + f(t)], U^{-1}u \rangle,\qquad t\ge 0.
\end{split}
\end{equation}
Differentiating both sides of the equation $\|U^{-1}u\|^2 = \langle U^{-1}u,U^{-1}u\rangle$ with respect to $t$ to get
\begin{equation}
\label{eql88}
\begin{split}
2\|U^{-1}u\| \frac{d}{dt}\|U^{-1}u\| &= \langle \frac{d}{dt}(U^{-1}u),U^{-1}u\rangle + \langle U^{-1}u,\frac{d}{dt}(U^{-1}u)\rangle\\
&= \langle \frac{d}{dt}(U^{-1}u),U^{-1}u\rangle + \overline{\langle \frac{d}{dt}(U^{-1}u),U^{-1}u\rangle} = 2\Real \langle \frac{d}{dt}(U^{-1}u),U^{-1}u\rangle.
\end{split}
\end{equation}
The property $\langle u, v\rangle = \overline{\langle v, u\rangle},\forall u,v\in \mathcal{H}$ was used in equation \eqref{eql88}.

Taking the real parts of both sides of \eqref{j9} and using \eqref{jn6}, one obtains
\begin{equation}
\label{q10}
\begin{split}
\Real \langle \frac{d}{dt}(U^{-1}u),U^{-1}u\rangle
 &= \Real \langle (U^{-1})^*U^{-1}Au, u\rangle + \Real \langle U^{-1}[G(t,u) + f(t)], U^{-1}u \rangle\\
&\le \Real \langle U^{-1}[G(t,u) + f(t)], U^{-1}u \rangle\\
&\le \|U^{-1}[G(t,u) + f(t)]\|\| U^{-1}u \|,\qquad t\ge 0.
\end{split}
\end{equation}
Here, we have used the inequality $\Real \langle u,v\rangle \le |\langle u,v\rangle| \le \|u\|\|v\|$. 
From equation \eqref{eql88} and inequality \eqref{q10} one gets
$$
\|U^{-1}u\|\frac{d}{dt}\|U^{-1}u\| \le \|U^{-1}[G(t,u) + f(t)]\|\| U^{-1}u \|,\qquad t\ge 0. 
$$
This implies
\begin{align}
\label{neqc90}
\frac{d}{dt}\|U^{-1}u\| \le \| U^{-1}[G(t,u) + f(t)]\| \le \| U^{-1}(t)\|\bigg(\|G(t,u)\| + \|f(t)\|\bigg).
\end{align}
Inequalities \eqref{neqc90}, \eqref{dec3} and \eqref{dec4} imply
\begin{equation}
\label{j10}
\begin{split}
\frac{d}{dt}\|U^{-1}(t)u(t)\| 
&\le \| U^{-1}(t)\|\bigg( \alpha(t)\|u(t)\|^p + \beta(t)\bigg),\qquad t\ge 0,\quad u(0) = u_0.
\end{split}
\end{equation}
Integrate equation \eqref{j10} from 0 to $t$ to get
\begin{equation}
\label{q12}
\|U^{-1}(t)u(t)\| \le \|u_0\| + \int_0^t \| U^{-1}(\xi)\|\bigg( \alpha(\xi)\|u(\xi)\|^p + \beta(\xi)\bigg)\, d\xi,\qquad t\ge 0.
\end{equation}
One has
\begin{equation*}
\begin{split}
\|u(t)\| &= \|U(t)U^{-1}(t)u(t)\| 
\le \|U(t)\| \|U^{-1}(t)u(t)\|,\qquad t\ge 0.
\end{split}
\end{equation*}
This and inequality \eqref{q12} imply
\begin{equation}
\label{q11}
\begin{split}
\|u(t)\| &\le \|U(t)\|\|u_0\| + \|U(t)\|\int_0^t \| U^{-1}(\xi)\|\bigg( \alpha(\xi)\|u(\xi)\|^p + \beta(\xi)\bigg)\, d\xi,\qquad t\ge 0.
\end{split}
\end{equation}

Inequality \eqref{q11} is similar to inequality \eqref{zx1} that was used in the proof of Theorem \ref{thm2.1}. 
From inequality \eqref{q11}, Assumption {\bf B}, and similar arguments as in Theorem \ref{thm2.1} one obtains the following result:

\begin{theorem}
\label{thm3.1}
Assume that Assumption {\bf B} holds and that 
\begin{equation*}
\sup_{t\ge 0} \|U(t)\| <\infty,\qquad \sup_{t\ge 0}\|U(t)\|\int_0^t \|U^{-1}(\xi)\|\alpha(\xi)\, d\xi <\infty.
\end{equation*}
Then the solution $u(t)\equiv 0$ is Lyapunov stable under persistently acting perturbation $f(t)$. 
Namely, given any arbitrarily small $\epsilon>0$, if $\|f(t)\|$ is sufficiently small, then there exists $\delta>0$ such that if $\|u(0)\|<\delta$, then $\|u(t)\| <\epsilon$ for all $t\ge 0$. 
\end{theorem}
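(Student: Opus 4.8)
The plan is to mirror the proof of Theorem~\ref{thm2.1} almost verbatim, using inequality \eqref{q11} in place of inequality \eqref{zx1}. The crucial observation is that \eqref{q11} has exactly the same structure as \eqref{zx1}, except that the integral kernel $\|U(t,\xi)\|=\|U(t)U^{-1}(\xi)\|$ is replaced by the larger quantity $\|U(t)\|\,\|U^{-1}(\xi)\|$, with the factor $\|U(t)\|$ pulled outside the integral. All the genuinely new work---setting up the scalar differential inequality \eqref{j10} via the inner-product computation and the dissipativity hypothesis \eqref{jn6}, which kills the unbounded term $A(t)u$---has already been carried out, so what remains is a pure bootstrap/continuation scheme driven by \eqref{q11}.

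First I would fix the constants. Set $M:=\sup_{t\ge 0}\|U(t)\|\int_0^t \|U^{-1}(\xi)\|\alpha(\xi)\, d\xi$, which is finite by hypothesis. Given an arbitrary $\epsilon>0$, I would shrink it if necessary so that $\epsilon^{p-1}M<\tfrac14$, then choose $\delta>0$ with $\delta\,\sup_{t\ge 0}\|U(t)\|<\tfrac{\epsilon}{4}$ and $\delta<\epsilon$, and finally require $\beta(t)=\|f(t)\|<\tfrac{\epsilon\,\alpha(t)}{4M}$ for all $t\ge 0$. With this last choice one obtains, as the Hilbert-space analogue of \eqref{eq6},
\[\|U(t)\|\int_0^t \|U^{-1}(\xi)\|\beta(\xi)\, d\xi < \frac{\epsilon}{4M}\,\|U(t)\|\int_0^t \|U^{-1}(\xi)\|\alpha(\xi)\, d\xi \le \frac{\epsilon}{4},\qquad \forall t\ge 0.\]

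Next I would run the continuation argument. Let $[0,\tilde T)$ be the maximal interval of existence of $u(t)$, which is nonempty by Assumption~{\bf B}, and let $T\le \tilde T$ be the largest value with $\|u(t)\|\le\epsilon$ on $[0,T)$. Assuming for contradiction that $T<\tilde T$ forces $\|u(T)\|=\epsilon$ by continuity. Substituting the bound $\|u(\xi)\|\le\epsilon$ into \eqref{q11} and invoking the three constant choices gives
\[\|u(t)\| \le \sup_{t\ge 0}\|U(t)\|\,\delta + \epsilon^p\,\|U(t)\|\int_0^t \|U^{-1}(\xi)\|\alpha(\xi)\, d\xi + \|U(t)\|\int_0^t \|U^{-1}(\xi)\|\beta(\xi)\, d\xi \le \frac{3\epsilon}{4},\]
for $0\le t<T$, where the middle term is controlled since $\epsilon^p M=\epsilon\cdot\epsilon^{p-1}M<\tfrac{\epsilon}{4}$. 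By continuity $\|u(T)\|\le\tfrac{3\epsilon}{4}<\epsilon$, contradicting $\|u(T)\|=\epsilon$; hence $T\ge\tilde T$, so $\|u(t)\|\le\epsilon$ on $[0,\tilde T)$. A final application of continuity together with Assumption~{\bf B} at $t=\tilde T$ extends $u$ past $\tilde T$, contradicting maximality, whence $\tilde T=\infty$ and $\|u(t)\|\le\epsilon$ for all $t\ge 0$.

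I do not anticipate a serious obstacle, as the author's remark that this follows by ``similar arguments'' suggests. The only point requiring care is the bookkeeping on constants: one must verify that pulling $\|U(t)\|$ outside the integral does not spoil the estimate. It does not, precisely because $\sup_{t\ge 0}\|U(t)\|<\infty$ and because the quantity the hypothesis controls is exactly the weighted supremum $\sup_{t\ge 0}\|U(t)\|\int_0^t\|U^{-1}(\xi)\|\alpha(\xi)\,d\xi$, so the three-way split into $\tfrac{\epsilon}{4}+\tfrac{\epsilon}{4}+\tfrac{\epsilon}{4}$ goes through exactly as in Theorem~\ref{thm2.1}.
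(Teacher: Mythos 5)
Your proposal is correct and is exactly the argument the paper intends: the paper proves Theorem \ref{thm3.1} by citing inequality \eqref{q11}, Assumption {\bf B}, and ``similar arguments as in Theorem \ref{thm2.1},'' and your write-up fills in precisely those details, with the constants $M$, $\epsilon$, $\delta$, and the bound on $\beta(t)$ chosen just as in the proof of Theorem \ref{thm2.1}. The bookkeeping with $\|U(t)\|$ pulled outside the integral is handled correctly, since the hypothesis controls exactly the weighted supremum $\sup_{t\ge 0}\|U(t)\|\int_0^t\|U^{-1}(\xi)\|\alpha(\xi)\,d\xi$.
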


\begin{remark}{\rm Since $u(t) = U(t)U^{-1}(t)u(t)$, 
from inequality \eqref{j10} one gets
\begin{equation}
\label{jn17}
\begin{split}
\frac{d}{dt} \|U^{-1}(t)u(t)\| 
&\le \| U^{-1}(t)\|\bigg( \alpha(t)\|U(t)U^{-1}(t)u(t)\|^p + \beta(t)\bigg)\\
&\le \| U^{-1}(t)\|\bigg(\alpha(t) \|U(t)\|^p\|U^{-1}(t)u(t)\|^p + \beta(t)\bigg) \\
&= \alpha(t)  \| U^{-1}(t)\| \|U(t)\|^p\bigg(\|U^{-1}(t)u(t)\|^p + \frac{\beta(t)}{\alpha(t) \|U(t)\|^p}\bigg) ,\qquad t\ge 0.
\end{split}
\end{equation}
This inequality is similar to inequality \eqref{dec5} used in the proof of Theorem \ref{thm2.3}.
}
\end{remark}

Again, we consider the following problem: Given the nonlinear operator $F(t,u)$, i.e., given $B(t)=F'_u(t,0)$ and $\alpha(t)$, under what conditions on $f(t)$ does the solution to problem \eqref{jq1} exist globally, is the solution bounded, and does the solution decay to zero as $t\to\infty$? An answer to this question is given in Theorem \ref{thm3.3} below. 

From inequality \eqref{jn17}, Assumption {\bf B}, and similar arguments as in 
Theorem \ref{thm2.3}, one can prove the following result:

\begin{theorem}
\label{thm3.3}
Assume that Assumption {\bf B} hold and that
\begin{equation}
\label{no90}
\sup_{t\ge 0} \bigg(\frac{\beta(t)}{\alpha(t)}\bigg)^\frac{1}{p} \frac{1}{\|U(t)\|} 
< \frac{1}{\bigg[(p-1)\int_0^\infty \|U^{-1}(\xi)\|\|U(\xi)\|^p \alpha(\xi) \,d\xi \bigg]^{\frac{1}{p-1}}} - \|u(0)\|.
\end{equation}
Then the solution $u(t)$ to problem \eqref{jq1} exists globally and satisfies the estimate:
\begin{equation*}
\|u(t)\| \le C_2 \|U(t)\|,\qquad t\ge 0,\quad C_2 = const>0.
\end{equation*}
Consequently, 
\begin{enumerate}
\item[(i)]{}
if
\begin{equation}
\label{no91}
\sup_{t\ge 0} \|U(t)\| < \infty,
\end{equation}
then the solution $u(t)$ is bounded;
\item[(ii)]{}
if
\begin{equation*}
\lim_{t\to\infty} \|U(t)\| = 0,
\end{equation*}
then
\begin{equation*}
\lim_{t\to\infty} u(t) = 0.
\end{equation*}
\end{enumerate}
\end{theorem}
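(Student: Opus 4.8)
The plan is to follow the proof of Theorem~\ref{thm2.3} almost verbatim. The crucial observation is that inequality \eqref{jn17}, already established above for the Hilbert-space problem \eqref{jq1}, has exactly the same form as inequality \eqref{dec5}, which drives the entire argument in the Banach-space case. The genuinely new analytic work in the present section---extracting a scalar differential inequality for $\|U^{-1}(t)u(t)\|$ despite the \emph{unbounded} operator $A(t)$, which was carried out by taking inner products and invoking the sign condition \eqref{jn6}---is thus already complete. What remains is pure calculus together with a continuation argument, and neither of these sees $A(t)$ again.

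First I would set $\omega := \sup_{t\ge 0}\big(\beta(t)/\alpha(t)\big)^{1/p}\|U(t)\|^{-1}$, so that hypothesis \eqref{no90} is literally Assumption~A2's inequality \eqref{eqn1}. Consequently the two elementary facts recorded in the Remark following Assumption~A2 are available here with the same $\omega$: namely the pointwise bound $\beta(t)/\big(\alpha(t)\|U(t)\|^p\big)\le \omega^p$ (inequality \eqref{eqs6}) and the strict inequality $\big(\|u(0)\|+\omega\big)^{1-p} > (p-1)\int_0^\infty \|U^{-1}(\xi)\|\|U(\xi)\|^p\alpha(\xi)\,d\xi$ (inequality \eqref{eq11}).

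Next, assuming for contradiction that the maximal interval of existence of $u(t)$ is a finite interval $[0,T)$, I would combine \eqref{jn17} with \eqref{eqs6} and the elementary inequality $a^p+b^p\le(a+b)^p$ for $a,b\ge 0$, $p>1$, to obtain
$$\frac{d}{dt}\|U^{-1}(t)u(t)\| \le \alpha(t)\,\|U^{-1}(t)\|\,\|U(t)\|^p\big(\|U^{-1}(t)u(t)\|+\omega\big)^p,\qquad 0\le t<T,$$
which is precisely \eqref{zx4}. Rewriting the left-hand side as the derivative of $\big(\|U^{-1}(t)u(t)\|+\omega\big)^{1-p}/(1-p)$, integrating from $0$ to $t$, and using $U(0)=I$ and $u(0)=u_0$ yields the analog of \eqref{eqx20}. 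Inequality \eqref{eq11} guarantees that the denominator appearing there stays positive for all $t$, so the right-hand side is bounded uniformly in $t\in[0,T)$ by a finite constant $M_3$; hence $\|U^{-1}(t)u(t)\|\le M_3^{1/(p-1)}-\omega$ as in \eqref{eq18}, and multiplying by $\|U(t)\|$ through the identity $u(t)=U(t)U^{-1}(t)u(t)$ gives $\|u(t)\|\le C_2\|U(t)\|$ on $[0,T)$ with $C_2:=M_3^{1/(p-1)}-\omega$, exactly as in \eqref{eq19}.

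Finally, continuity of $u$ together with this bound shows that $\|u(T)\|$ is finite, so Assumption~B continues the solution past $T$, contradicting maximality; therefore $T=\infty$ and the estimate $\|u(t)\|\le C_2\|U(t)\|$ holds for all $t\ge 0$. Conclusions (i) and (ii) are then immediate: boundedness of $\|U(t)\|$ forces boundedness of $u$, while $\|U(t)\|\to 0$ forces $u(t)\to 0$. I expect no real obstacle, since every nontrivial estimate has been relocated into the already-proven inequality \eqref{jn17}; the only point requiring a word of care is the implicit use of $\alpha(t)>0$ when dividing by $\alpha(t)\|U(t)\|^p$, which is harmless because on the set where $\alpha(t)=0$ the corresponding term in \eqref{jn17} simply vanishes.
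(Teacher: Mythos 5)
Your proposal is correct and is exactly the argument the paper intends: the paper's own ``proof'' of Theorem \ref{thm3.3} consists of the single remark that it follows from inequality \eqref{jn17}, Assumption \textbf{B}, and the same arguments as Theorem \ref{thm2.3}, which is precisely what you carry out (same $\omega$, same use of \eqref{eqs6} and \eqref{eq11}, same integration of the differential inequality, same continuation argument). Your closing observation about the harmlessness of $\alpha(t)=0$ is, if anything, slightly more careful than the paper itself.
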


Now we consider the following question: Given the perturbation $f(t)$ and the operator $B(t)=F_u'(t,0)$, under what conditions on the function $\alpha(t)$ in inequality \eqref{ae5} does the solution to problem \eqref{jq1} exist globally, is the solution  bounded, and does the solution decay to zero as $t\to\infty$? An answer to this question is given in Theorem \ref{thm3.4} below.

From inequality \eqref{q11}, Assumption {\bf B}, and similar arguments as in Theorem \ref{thm2.4} one obtains the following result:

\begin{theorem}
\label{thm3.4}
Assume that Assumption {\bf B} hold and that
\begin{equation*}
\alpha(t) \le \frac{\kappa\beta(t)}{\big[(\kappa + 1)\zeta(t)\big]^p},\qquad t\ge 0,\quad \kappa>0,\quad p>1,
\end{equation*}
where
\begin{equation*}
\zeta(t) := \|U(t)\|\|u(0)\| + \|U(t)\|\int_0^t \|U^{-1}(\xi)\|\beta(\xi)\, d\xi.  
\end{equation*}
Assume that $\|u(0)\|\not = 0$. 
Then the solution $u(t)$ to problem \eqref{jq1} exists globally and satisfies the estimate
\begin{equation*}
\|u(t)\| < (\kappa+1)\zeta(t),\qquad  \forall t\ge 0.
\end{equation*}
Consequently,
\begin{enumerate}
\item[(i)]{
if the function $\zeta(t)$ is bounded on $[0,\infty)$, 
then the solution $u(t)$ to problem \eqref{jq1} is bounded;}
\item[(ii)]{
if $\lim_{t\to\infty}\zeta(t) = 0$, 
then 
\begin{equation*}
\lim_{t\to\infty} u(t) = 0.
\end{equation*}}
\end{enumerate}
\end{theorem}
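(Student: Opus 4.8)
The plan is to follow the proof of Theorem~\ref{thm2.4} essentially verbatim, replacing its starting integral inequality \eqref{zx1} by the Hilbert-space analogue \eqref{q11}, which is already established and which has absorbed the unbounded operator $A(t)$ through the monotonicity condition \eqref{jn6}. Thus I may forget about $A(t)$ and work only with \eqref{q11}, Assumption~{\bf B}, and the hypothesis $\alpha(t)\le \kappa\beta(t)/[(\kappa+1)\zeta(t)]^p$. First I would note that, because $\alpha$, $\beta$, and $\|B(t)\|$ lie in $L^1_{loc}([0,\infty))$, the function $\zeta(t)$ in the statement is well defined and continuous on $[0,\infty)$. Then I would argue by contradiction: suppose the maximal interval of existence of $u(t)$ is a finite interval $[0,T)$, and aim to prove the a priori bound $\|u(t)\| < (\kappa+1)\zeta(t)$ on $[0,T)$.

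The core is the continuity (barrier) argument. At $t=0$ one has $\zeta(0)=\|u(0)\|$ since $U(0)=I$, so $\|u(0)\|=\zeta(0)<(\kappa+1)\zeta(0)$; here the assumption $u(0)\neq 0$ together with $\kappa>0$ is exactly what forces this inequality to be \emph{strict}. By continuity the strict bound persists on some $[0,\theta)$, so I may define $T_1\in(0,T]$ to be the largest value with $\|u(t)\|<(\kappa+1)\zeta(t)$ on $[0,T_1)$ and then show $T_1=T$. Assuming $T_1<T$, continuity gives $\|u(T_1)\|=(\kappa+1)\zeta(T_1)$. Feeding the bound $\|u(\xi)\|<(\kappa+1)\zeta(\xi)$ into \eqref{q11} and using $\alpha(\xi)[(\kappa+1)\zeta(\xi)]^p\le \kappa\beta(\xi)$ yields
\begin{equation*}
\|u(T_1)\| \le \|U(T_1)\|\|u(0)\| + (\kappa+1)\|U(T_1)\|\int_0^{T_1}\|U^{-1}(\xi)\|\beta(\xi)\,d\xi.
\end{equation*}
Comparing the right-hand side with $(\kappa+1)\zeta(T_1)$, the only discrepancy is in the first summand, and since $\|U(T_1)\|>0$ (because $U(T_1)$ is invertible), $\|u(0)\|\neq 0$, and $\kappa>0$, one gets $\|U(T_1)\|\|u(0)\|<(\kappa+1)\|U(T_1)\|\|u(0)\|$, hence $\|u(T_1)\|<(\kappa+1)\zeta(T_1)$, contradicting the equality. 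This forces $T_1=T$ and establishes the bound on all of $[0,T)$.

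Finally, the bound shows $\|u(T)\|$ is finite, so Assumption~{\bf B} lets me continue the solution past $T$, contradicting maximality and giving $T=\infty$; the estimate $\|u(t)\|<(\kappa+1)\zeta(t)$ then holds for all $t\ge 0$, whence boundedness of $u$ when $\zeta$ is bounded, and $\lim_{t\to\infty}u(t)=0$ when $\zeta(t)\to 0$, follow immediately. I do not expect a serious obstacle inside this argument, since it is a transcription of Theorem~\ref{thm2.4}: the genuine work of handling the unbounded $A(t)$---differentiating $\|U^{-1}u\|$, taking real parts, and invoking \eqref{jn6}---has already been carried out to produce \eqref{q11}. The one point demanding care is the repeated reliance on strictness: both the base point $t=0$ and the comparison at $T_1$ use $u(0)\neq 0$ together with the invertibility of $U$, which guarantees $\|U(t)\|$ never vanishes.
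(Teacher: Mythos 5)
Your proposal is correct and is exactly the paper's intended argument: the paper gives no separate proof of Theorem \ref{thm3.4}, stating only that it follows ``from inequality \eqref{q11}, Assumption {\bf B}, and similar arguments as in Theorem \ref{thm2.4},'' which is precisely the transcription you carry out. Your filled-in details (the barrier argument at $T_1$, the strictness supplied by $u(0)\neq 0$, $\kappa>0$, and $\|U(T_1)\|>0$, and the continuation via Assumption {\bf B}) match the proof of Theorem \ref{thm2.4} step for step.
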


\section{Examples}

Let us apply the new stability results in this paper to some simple examples. 

{\bf Example 1.} Let $\mathcal{X} = \mathbb{R}^2$. Consider the equation
\begin{equation}
\label{eq88}
\dot{u} = T\begin{bmatrix}
\cos t &0 \\
0 & -\cos t
\end{bmatrix}
T^{-1} u + \alpha(t)u^p + f(t),
\end{equation}
where $T$ is a nonsingular 2-by-2 constant matrix, $f:[0,\infty)\to \mathcal{X}$ and $\alpha(t)$ is a non-negative and continuous function on $[0,\infty)$.

Equation \eqref{eq88} is  a particular case of equation \eqref{2e1} with 
$$
B(t) = T\begin{bmatrix} \cos t & 0\\ 0 & -\cos t\end{bmatrix}T^{-1},\qquad G(t,u) = \alpha(t)u^p. 
$$
For this function $G(t,u)$ one has $\|G(t,u)\| = \alpha(t)\|u\|^p$. 
Equation \eqref{eqzx1} for this operator $B(t)$ becomes
\begin{equation*}
\frac{\partial}{\partial t}U(t,\xi) =  T\begin{bmatrix}
\cos t &0 \\
0 & -\cos t
\end{bmatrix}
T^{-1}U(t,\xi),\qquad t\ge \xi,\qquad U(\xi,\xi) = \begin{bmatrix} 1 & 0\\ 0 & 1\end{bmatrix}.
\end{equation*}
The solution to this equation is
\begin{equation}
\label{eq109}
U(t,\xi) = T\begin{bmatrix}
e^{\sin t - \sin \xi} &0 \\
0 & e^{-\sin t + \sin \xi}
\end{bmatrix}
T^{-1}.
\end{equation}
This implies
\begin{equation}
\label{nov5}
\|U(t,\xi)\| \le \max_{t\ge\xi\ge 0}\big\{e^{\sin t - \sin \xi} ,e^{-\sin t + \sin \xi}\big\}\le e^2,\qquad \forall t\ge \xi\ge 0.
\end{equation}
Recall that $U(t,\xi) = U(t)U^{-1}(\xi)$ and $U(t) = U(t,0)$. This and equation \eqref{eq109} imply
\begin{equation}
\label{eqf10}
U(t) = T\begin{bmatrix}
e^{\sin t} &0 \\
0 & e^{-\sin t}
\end{bmatrix}
T^{-1}.
\end{equation}
From equation \eqref{eqf10} and inequality \eqref{nov5} we get
\begin{equation}
\label{ewq1}
\|U(t)\| = \max_{t\ge 0}\big\{e^{\sin t},e^{-\sin t}\big\} \le e,\qquad \sup_{t\ge 0}\int_0^t \|U(t,\xi)\| \alpha(\xi)\, d\xi \le e^2 \int_0^\infty \alpha(\xi)\, d\xi.
\end{equation}
Therefore, it follows from \eqref{ewq1} and Theorem \ref{thm2.1} that if $\int_0^\infty \alpha(t)\, dt<\infty$, then the solution $u=0$ is Lyapunov stable under persistently acting perturbation $f(t)$. 

If $\int_0^\infty \beta(t)\, dt < \infty$ and inequality \eqref{eqs9} holds, i.e.,
\begin{equation*}
\alpha(t) \le \frac{\kappa\beta(t)}{[(\kappa + 1)\zeta(t)]^p},\qquad t\ge 0,\quad \kappa >0,
\end{equation*}
where
$$
\zeta(t) := \|U(t)\|\|u(0)\| + \int_0^t \|U(t)U^{-1}(\xi)\|\beta(\xi)\, d\xi,
$$
then it follows from \eqref{eq22}, inequality \eqref{ewq1}, inequality \eqref{nov5}, and Theorem \ref{thm2.4} that
\begin{equation*}
\begin{split}
\|u(t)\| \le (\kappa + 1) \zeta(t) &= (\kappa + 1)\bigg[\|U(t)\|\|u(0)\| + \int_0^t \|U(t)U^{-1}(\xi)\|\beta(\xi)\, d\xi\bigg]\\
&\le (\kappa + 1) \bigg[e \|u_0\| + e^2 \int_0^\infty \beta(\xi)\, d\xi\bigg] < \infty,\qquad \forall t\ge 0.
\end{split}
\end{equation*}
This means that the solution to equation \eqref{eq88} is bounded. 

Note that if we define
\begin{equation*}
A := \begin{bmatrix}
\cos t &0 \\
0 & -\cos t
\end{bmatrix}
\end{equation*}
then 
\begin{equation*}
\langle A u, u\rangle \le \gamma(t)\|u\|^2,\qquad \forall u\in \mathbb{R}^2,
\end{equation*}
where $\gamma(t) = |\cos(t)|$. 
Since 
$$
\int_0^\infty \gamma(t)\, dt = \int_0^\infty |\cos(t)|\, dt = \infty,
$$ 
the stability results in \cite{R4} and \cite{NH} are not applicable to this example as the results in \cite{R4} and \cite{NH} require $\sup_{t\ge 0}\int_0^t \gamma(\xi)\, d\xi < \infty$. 

{\bf Example 2.}  
Let $\mathcal{X} = \mathbb{R}^3$. 
Consider the equation
\begin{equation}
\label{eq94}
\dot{u} = B(t)u + \alpha(t)u^p + f(t),\qquad u(0)=u_0\in \mathbb{R}^3,
\end{equation}
where
$$
B(t): = 
\begin{bmatrix}
2\cos t - 1&0  &0 \\
0 & 2\cos(t-\frac{2\pi}{3}) - 1& 0\\
0 & 0 & 2\cos(t-\frac{4\pi}{3}) - 1\\
\end{bmatrix}
$$
and $f(t)$ and $u(t)$ are functions from $[0,\infty)$ to $\mathbb{R}^3$. 
Recall from \eqref{eqzx1} that $U(t,\xi) $ is the solution to
\begin{equation*}
\frac{\partial}{\partial t}U(t,\xi) =  
B(t)
U(t,\xi),\qquad t\ge \xi,\quad U(\xi,\xi) = I_3
\end{equation*}
where $I_3$ is the 3-by-3 identity matrix. 
The function $U(t,\xi) $ in this example is
\begin{equation*}
U(t,\xi) = \begin{bmatrix}
e^{\xi-t + 2\sin t  - 2\sin \xi} &0 &0\\
0 & e^{\xi-t + 2\sin(t-\frac{2\pi}{3}) - 2\sin(\xi -\frac{2\pi}{3})} &0\\
0 &   0&  e^{\xi -t + 2\sin(t-\frac{4\pi}{3}) - 2\sin(\xi -\frac{4\pi}{3})}\\
\end{bmatrix}.
\end{equation*}
Thus,
\begin{equation}
\label{bicho3}
\|U(t,\xi)\| \le e^{\xi -t + 4},\qquad \forall t\ge \xi\ge 0.
\end{equation}
By construction we have 
$$
U(t) = U(t,0)= \begin{bmatrix}
e^{-t + 2\sin t} &0 &0\\
0 & e^{-t + 2\sin(t-\frac{2\pi}{3}) + 2\sin(\frac{2\pi}{3})} &0\\
0 &   0&  e^{ -t + 2\sin(t-\frac{4\pi}{3}) + 2\sin(\frac{4\pi}{3})}\\
\end{bmatrix}
$$ 
and
$$
U^{-1}(t) = \begin{bmatrix}
e^{t - 2\sin t} &0 &0\\
0 & e^{t - 2\sin(t-\frac{2\pi}{3}) - 2\sin(\frac{2\pi}{3})} &0\\
0 &   0&  e^{ t - 2\sin(t-\frac{4\pi}{3}) - 2\sin(\frac{4\pi}{3})}\\
\end{bmatrix}.
$$ 
Therefore,
\begin{equation}
\label{geq118}
e^{-t - 4}<\|U(t)\|  < e^{-t + 4},\qquad \|[U(t)]^{-1}\|  < e^{t + 4},\qquad t\ge 0. 
\end{equation}
From inequality \eqref{bicho3} one gets
$$
\sup_{t\ge 0}\int_0^t \|U(t,\xi)\| \alpha(\xi)\, d\xi \le \sup_{t\ge 0}\int_0^t e^{\xi -t + 4} \alpha(\xi)\, d\xi
\le \|\alpha\|_\infty\sup_{t\ge 0}\int_0^t e^{\xi -t + 4}\, d\xi = e^4 \|\alpha\|_\infty,
$$
where $\|\alpha\|_\infty:=\sup_{t\ge 0}|\alpha(t)|$. 
Therefore, if $\|\alpha\|_\infty < \infty$, i.e., $\alpha(t)$ is bounded on $[0,\infty)$, then the inequalities in \eqref{ieq27} hold. Consequently, it follows from Theorem \ref{thm2.1} that the solution $u=0$ is Lyapunov stable under persistently acting perturbation $f(t)$. 

Let us discuss the application of Corollary \ref{corrollary2.5} to this example. 
From inequality \eqref{geq118} one gets
\begin{equation}
\label{geq119}
\int_0^\infty \alpha(\xi)\|U^{-1}(\xi)\| \|U(\xi)\|^p\, d\xi \le \int_0^\infty \alpha(\xi) e^{(1-p)\xi + 4(1+p)}\, d\xi.
\end{equation}
Assume that
\begin{equation}
\label{geq120}
\sup_{t\ge 0}\bigg(\frac{\beta(t)}{\alpha(t)}\bigg)^{\frac{1}{p}} e^{t+4} <  \bigg(\frac{1}{(p-1)\int_0^\infty \alpha(\xi)e^{(1-p)\xi + 4(1+p)}\, d\xi}\bigg)^{\frac{1}{p-1}}.
\end{equation}
From \eqref{geq118}, \eqref{geq120}, and \eqref{geq119} one gets
\begin{equation*}
\begin{split}
\sup_{t\ge 0}\bigg(\frac{\beta(t)}{\alpha(t)}\bigg)^{\frac{1}{p}} \frac{1}{\|U(t)\|} \le 
\sup_{t\ge 0}\bigg(\frac{\beta(t)}{\alpha(t)}\bigg)^{\frac{1}{p}} &e^{t+4} <\bigg(\frac{1}{(p-1)\int_0^\infty \alpha(\xi)e^{(1-p)\xi + 4(1+p)}\, d\xi}\bigg)^{\frac{1}{p-1}}\\
\le & \bigg(\frac{1}{(p-1)\int_0^\infty \alpha(\xi)\|U^{-1}(\xi)\| \|U(\xi)\|^p\, d\xi}\bigg)^{\frac{1}{p-1}}.
\end{split}
\end{equation*}
Thus, inequality \eqref{nov3} holds if inequality \eqref{geq120} holds. Also, inequality \eqref{eq2020x1} follows from inequality in \eqref{geq118}. 
Therefore, if 
$$
\|u_0\| \le \bigg(\frac{1}{(p-1)\int_0^\infty \alpha(\xi)\|U^{-1}(\xi)\| \|U(\xi)\|^p\, d\xi}\bigg)^{\frac{1}{p-1}} - 
\sup_{t\ge 0}\bigg(\frac{\beta(t)}{\alpha(t)}\bigg)^{\frac{1}{p}} \frac{1}{\|U(t)\|},
$$
then by Corollary \ref{corrollary2.5} one gets
$$
\|u(t)\| \le C_2 \|U(t)\| \le C_2 e^{-t + 4},\qquad t\ge 0. 
$$
This means that the solution $u(t)$ decays exponentially to 0 at the rate of $e^{-t}$. 

If inequality \eqref{eqs9} holds, then it follows from Theorem \ref{thm2.4} that 
\begin{equation}
\label{bicho1}
\|u(t)\| \le (\kappa + 1)\bigg[\|U(t)\|\|u_0\| + \int_0^t \|U(t,\xi)\|\beta(\xi)\,d\xi\bigg],\qquad t\ge 0. 
\end{equation}
We claim that if $\lim_{t\to\infty}\beta(t)=0$ then 
\begin{equation}
\label{bicho2}
\lim_{t\to\infty}\|u(t)\| = 0.
\end{equation}
To verify relation \eqref{bicho2} we will show that the right-hand side of inequality \eqref{bicho1} converges to 0 as $t$ to $\infty$. 
Since $\lim_{t\to\infty}\|U(t)\| = 0$ due to \eqref{geq118},
it suffices to show that 
\begin{equation*}
\lim_{t\to\infty} \int_0^t \|U(t)U^{-1}(\xi)\|\beta(\xi)\, d\xi =0.
\end{equation*}
It follows from inequality \eqref{bicho3} that
\begin{equation}
\label{no117}
\begin{split}
\lim_{t\to\infty} \int_0^t \|U(t)U^{-1}(\xi)\|\beta(\xi)\, d\xi  &\le 
\lim_{t\to\infty} e^4\int_0^t e^{\xi -t}\beta(\xi)\, d\xi 
=
e^4\lim_{t\to\infty} \frac{\int_0^t e^{\xi}\beta(\xi)\, d\xi }{e^t}=0.
\end{split}
\end{equation}
Indeed, if $\int_0^\infty e^{\xi}\beta(\xi)\, d\xi<\infty$, then it is clear that the last equality in \eqref{no117} holds. If $\int_0^\infty e^{\xi}\beta(\xi)\, d\xi=\infty$ and $\lim_{t\to\infty}\beta(t)=0$, then the last equality in \eqref{no117} follows from L'Hospital's rule. 

Let us show why the stability results in \cite{R4} and \cite{NH} are not applicable to equation \eqref{eq94}. 
Define 
\begin{equation*}
A = \begin{bmatrix}
2\cos t - 1&0  &0 \\
0 & 2\cos(t-\frac{2\pi}{3}) - 1& 0\\
0 & 0 & 2\cos(t-\frac{4\pi}{3}) - 1\\
\end{bmatrix}.
\end{equation*}
Then the norm of $A$ is $\gamma(t) := \|A\|= \max\{2\cos t, 2\cos(t-\frac{2\pi}{3}), 2\cos(t-\frac{4\pi}{3})\} - 1$. 
By its definition $\gamma(t)$ is a periodic function of period $2\pi$. In addition, we have
$$
\gamma(t) = \left \{ \begin{matrix}2\cos t - 1& \text{if} & -\pi/3 \le t \le \pi/3,\\ 2\cos(t-\frac{2\pi}{3}) -1 & \text{if} & \pi/3 \le t \le \pi,\\ 2\cos(t-\frac{4\pi}{3}) - 1& \text{if} & -\pi \le t \le -\pi/3.\end{matrix} \right.
$$
Thus, $\gamma(t) \ge 0$, $\forall t\ge 0$. 
Therefore, one has $\int_0^\infty \gamma(t) = \infty$ since $\gamma(t)$ is periodic and nonnegative on $[0,\infty)$. 
Recall that the stability results in \cite{R4} and \cite{NH} require $\gamma(t)$ to be integrable on $[0,\infty)$. 
Therefore, the stability results in \cite{R4} and \cite{NH} are not applicable to equation \eqref{eq94}. 

{\bf Example 3.} 
In this example we will demonstrate the application of the results in Section \ref{section3} to equations in Hilbert spaces with unbounded operators. 

Let $\mathcal{H}=L^2(\mathbb{R}^3)\oplus L^2(\mathbb{R}^3)$ the direct sum of $L^2(\mathbb{R}^3)$ and itself, $L=L^*$ be a selfadjoint operator in $L^2(\mathbb{R}^3)$ which is the closure of the Laplacian with domain of definition $C_0^\infty(\mathbb{R}^3)$. Consider the equation 
\begin{equation}
\label{no112}
\dot{u} = \begin{bmatrix}
\cos t &0 \\
0 & -\cos t
\end{bmatrix}
u
+
\begin{bmatrix}
L &0 \\
0 & L
\end{bmatrix}u
 + G(t,u) + f(t),\qquad u(0)=u_0.
\end{equation}
Assume that $\|G(t,u)\| \le \alpha(t)\|u\|^p$, $p>1$. There are many functions satisfying this inequality. For example, $G(t,u) = \alpha(t)u^p$. 

Equation \eqref{no112} is of the form \eqref{jq1} with 
$$
B(t) = \begin{bmatrix}\cos t & 0\\ 0 &-\cos t \end{bmatrix},\qquad A(t) = \begin{bmatrix}
L &0 \\
0 & L
\end{bmatrix}.
$$
The function $U(t)$, according to equation \eqref{q9}, is the solution to
\begin{equation*}
\frac{\partial}{\partial t}U(t) =  \begin{bmatrix}
\cos t &0 \\
0 & -\cos t
\end{bmatrix}
U(t),\qquad U(0) = I.
\end{equation*}
Thus,
\begin{equation}
\label{nov5.1}
U(t) = \begin{bmatrix}
e^{\sin t } &0 \\
0 & e^{-\sin t}
\end{bmatrix},\qquad U^{-1}(t) = \begin{bmatrix}
e^{-\sin t } &0 \\
0 & e^{\sin t}
\end{bmatrix}.
\end{equation}
It is clear that $\langle (U^{-1})^*U^{-1}Lu,u\rangle \le 0$ where $\langle \cdot,\cdot\rangle$ 
is the inner product in $\mathcal{H}$.  
From equation \eqref{nov5.1} one gets
\begin{equation}
\label{nov5.2}
\frac{1}{e}\le\|U(t)\| \le e,\qquad \frac{1}{e} \le \|U^{-1}(t)\| \le e,\qquad \forall t\ge 0.
\end{equation}
Therefore, 
\begin{equation}
\label{nov5.3}
\sup_{t\ge 0}\|U(t)\| \le e,\qquad \sup_{t\ge 0} \|U(t)\| \int_0^t \|U^{-1}(\xi)\| \alpha(\xi)\, d\xi \le e^2 \int_0^\infty \alpha(\xi)\, d\xi.
\end{equation}
Thus, if 
$$\int_0^\infty \alpha(t)\, dt <\infty$$ then it follows from\eqref{nov5.3} and Theorem \ref{thm3.1} that the solution $u=0$ is Lyapunov stable under persistently acting perturbation $f$.

{\it Let us demonstrate the application of Theorem \ref{thm3.3} to equation \eqref{no112}}. Assume that 
\begin{equation}
\label{nov5.4}
0<\|u_0\| < \frac{1}{\bigg[(p-1)\int_0^\infty e^{p+1} \alpha(\xi) \,d\xi \bigg]^{\frac{1}{p-1}}} - e\sup_{t\ge 0} \bigg(\frac{\beta(t)}{\alpha(t)}\bigg)^\frac{1}{p}.
\end{equation}
Then it follows from \eqref{nov5.2} and \eqref{nov5.4} that
\begin{equation}
\label{nov5.5}
\begin{split}
\sup_{t\ge 0} \bigg(\frac{\beta(t)}{\alpha(t)}\bigg)^\frac{1}{p} \frac{1}{\|U(t)\|} \le \sup_{t\ge 0}& \bigg(\frac{\beta(t)}{\alpha(t)}\bigg)^\frac{1}{p} e \le  \frac{1}{\bigg[(p-1)\int_0^\infty e^{p+1} \alpha(\xi) \,d\xi \bigg]^{\frac{1}{p-1}}}  - \|u_0\|\\
& \le \frac{1}{\bigg[(p-1)\int_0^\infty \|U^{-1}(\xi)\| \|U(\xi)\|^p\alpha(\xi) \,d\xi \bigg]^{\frac{1}{p-1}}}  - \|u_0\|.
\end{split}
\end{equation}
It follows from \eqref{nov5.3} and \eqref{nov5.5} that inequalities \eqref{no90} and \eqref{no91} hold. From inequalities \eqref{no90} and \eqref{no91} and Theorem \ref{thm3.3} one concludes that 
$\|u(t)\| \le C_2 \|U(t)\|$ for some constant $C_2>0$. 
This and the first inequality in \eqref{nov5.3} imply that the solution to equation \eqref{no112} is bounded if \eqref{nov5.4} holds. 

Now instead of having \eqref{nov5.4} we assume that
\begin{equation}
\label{jan9.1}
(\kappa + 1)^p\bigg(e\|u_0\| + e^2 \int_0^t \beta(\xi)\, d\xi\bigg)^p \le \kappa \frac{\beta(t)}{\alpha(t)},\qquad t\ge 0, \quad \kappa>0.
\end{equation}
It follows from \eqref{jan9.1} and \eqref{nov5.2} that
\begin{equation}
\label{jan9.2}
\begin{split}
\alpha(t) &\le \frac{\kappa\beta(t)}{(\kappa + 1)^p\big[e\|u_0\| + e^2 \int_0^t \beta(\xi)\, d\xi\big]^p}\\
&\le \frac{\kappa\beta(t)}{(\kappa + 1)^p\big[\|U(t)\|\|u_0\| + \|U(t)\| \int_0^t  \|U^{-1}(\xi)\| \beta(\xi)\, d\xi\big]^p},
\end{split}
\end{equation}
for all $t\ge 0$. It follows from \eqref{jan9.2} and Theorem \ref{thm3.4} that
$$
0\le \|u(t)\| \le (\kappa + 1)\bigg( \|U(t)\|\|u_0\| + \|U(t)\| \int_0^t  \|U^{-1}(\xi)\| \beta(\xi)\, d\xi\bigg),\qquad t\ge 0.
$$
This and \eqref{nov5.2} imply
$$
0\le \|u(t)\| \le (\kappa + 1)\bigg( e\|u_0\| + e^2 \int_0^t \beta(\xi)\, d\xi\bigg).
$$
Therefore, the solution $u(t)$ is bounded on $[0,\infty)$ if $\int_0^\infty \beta(\xi)\, d\xi < \infty$ and inequality \eqref{jan9.1} holds. Note that the stability results in \cite{R4} are also not applicable to this example. 

\section*{Conclusion}

The stability of solutions to abstract evolution equations has been studied under nonclassical assumptions. 
Our new results are applicable to equations in Banach spaces and equations in Hilbert spaces with unbounded operators under nonclassical assumptions. 
Using our new results, we have been able to obtain stability results for some simple equations to which the stability of the solutions cannot be obtained by using classical results in the literature.


\begin{thebibliography}{99}

\bibitem{C} L. Cesari, {\it Asymptotic Behavior and Stability Problems in Ordinary Differential Equations}, Springer--Verlag, Berlin, 1963. 

\bibitem{CL} E. Coddington, N. Levinson, {\it Theory of Ordinary Differential Equations}, McGrawHill, NewYork, 1955.

\bibitem{DK} Y. Daleckii, M. Krein, {\it Stability of solutions of differential equations in Banach spaces},  Amer. Math. Soc., Providence, RI, 1974.

\bibitem{D} B. Demidovich, {\it Lectures on Mathematical Theory of Stability}, Nauka: Moscow, 1967. 

\bibitem{H} P. Hartman, {\it Ordinary Differential Equations}, Wiley, NewYork, 1964.

\bibitem{NH}
N. S. Hoang,
Stability results of some abstract evolution equations, {\it Differ. Equ. Appl.}, 6 (2014), 417–428.

\bibitem{L} A. Lyapunov, {\it Collected Works}, II. Acad. Sci., Moscow, 1954. 

\bibitem{R1} A. G. Ramm, N. S. Hoang, {\it Dynamical Systems Method and Applications, Theoretical Developments and Numerical examples}, Wiley, Hoboken, 2012.

\bibitem{R2} A. G. Ramm, Asymptotic stability of solutions to abstract differential equations, {\it J. Abstr. Differ. Equ. Appl.}, 1 (2010), N1, 27-–34.

\bibitem{R3} Ramm A.  G., Stability of solutions to some evolution problems, {\it Chaotic Modeling and Simulation (CMSIM)}, 1 (2011), 17–-27.

\bibitem{R4} A. G. Ramm, A stability result for abstract evolution problems, {\it Math. Meth. Appl. Sci.}, 36 (2012),  N4, 422–-426. 

\bibitem{R5} A. G. Ramm, Stability of the solutions to evolution problems, {\it Mathematics}, 1 (2013), 46--64. 


\bibitem{T} R. Temam, {\it Infinite-dimensional Dynamical Systems in Mechanics and Physics}, Springer-Verlag, NewYork, 1997.

\end{thebibliography}
\end{document}